\newtheorem{theorem}{Theorem}[section]
\newtheorem{lemma}[theorem]{Lemma}
\newtheorem{proposition}[theorem]{Proposition}
\newtheorem{corollary}[theorem]{Corollary}
\newtheorem{question}[theorem]{Question}
\theoremstyle{definition}
\newtheorem{remark}[theorem]{Remark}
\newenvironment{proofof}[1]{\noindent{\it Proof of
#1.}}{\hfill$\square$\\\mbox{}}
\def\Z{{\mathbb{Z}}}
\def\F{{\mathbb{F}}}
\def\coef{{\mathrm{Coef}}}
\def\tr{{\mathrm{tr}}}
\begin{document}

\title[Characteristic free description of semi-invariants of $2\times 2$ matrices]
{Characteristic free description of semi-invariants of $2\times 2$ matrices}

\author[M. Domokos]
{M. Domokos}
\address{Alfr\'ed R\'enyi Institute of Mathematics,
Re\'altanoda utca 13-15, 1053 Budapest, Hungary}
\email{domokos.matyas@renyi.hu}

\thanks{This research was partially supported by National Research, Development and Innovation Office,  NKFIH K 119934.}

\subjclass[2010]{Primary: 13A50;  Secondary: 14L30, 15A72, 16R30.}


\begin{abstract} A minimal homogeneous generating system of the algebra of semi-invariants of tuples of two-by-two matrices over an infinite field of characteristic two or over the ring of integers is given. 
In an alternative interpretation this yields a minimal system of homogeneous generators 
for the vector invariants of the special orthogonal group of degree four over a field of characteristic two or over the ring of integers. 
An irredundant separating system of semi-invariants of tuples two-by-two matrices  is also determined, it turns out to be independent of the characteristic. 
\end{abstract}

\maketitle


\section{Introduction}\label{sec:intro} 

For positive integers $n,m$ 
denote by $M:=(\F^{n\times n})^m$ the space of $m$-tuples of $n\times n$ matrices over an infinite field $\F$. 
The group $SL_n\times SL_n$ (the direct product of two copies of the special linear group over $\F$) acts on this space  as follows: 
\[(g,h)\cdot (A_1,\dots,A_m):=(gA_1h^{-1},\dots,gA_mh^{-1})\] 
for $g,h\in SL_n$, $A_1,\dots,A_m\in \F^{n\times n}$. Write $R=\mathcal{O}(M)^{SL_n\times SL_n}$ for the corresponding \emph{algebra of polynomial invariants}; that is, $R$ consists of the polynomial functions on $M$ that are constant along the $SL_n\times SL_n$-orbits. We call $R$ the \emph{algebra of semi-invariants of $m$-tuples of $n\times n$ matrices} 
(and sometimes we shall denote it by $R_{\F}$, when the indication of the base field seems desirable). 
Interest in this algebra was raised recently by connections to algebraic 
complexity theory, see for example \cite{garg-etal}, \cite{hrubes-etal}, \cite{ivanyos_etal}. Another motivation comes from representation theory of quivers: semi-invariants of tuples of matrices are special instances of semi-invariants  associated to representation spaces of quivers, which are used to construct 
moduli spaces parametrizing quiver representations, see \cite{king}. 

$\F$-vector space spanning sets of $R$ were given independently by Derksen and Weyman \cite{derksen-weyman}, Domokos and Zubkov \cite{domokos-zubkov}, and in characteristic zero by Schofield and Van den Bergh  \cite{schofield-vandenbergh} (these papers deal with the more general setup of semi-invariants of quiver representations). To deduce a finite $\F$-algebra generating set of $R$ one needs an explicit degree bound for the generators. Good degree bounds were given 
recently by Derksen and Makam \cite{derksen-makam:1}, \cite{derksen-makam:2}. 
These results imply that the $\Z$-form $R_{\Z}$ of $R$ (see Section~\ref{sec:Z-form}) is  generated as a ring by an explicitly given finite set of elements (see Theorem~\ref{thm:Z-fingen}), moreover, generators of $R_{\Z}$ yield generators in $R_{\F}$ by base change (see Proposition~\ref{prop:integral-invariants}). 
This raises the question of finding an explicit minimal homogeneous system of generators of the ring $R_{\Z}$ (as an optimal characteristic free description of the generators of $R_{\F}$).  
In the present paper we answer the special case $n=2$ of this question. 

In fact an explicit minimal homogeneous generating system of $R_{\F}$ is known in very few  cases only. 
In the special case $n=2$ and $\F=\mathbb{C}$, a minimal homogeneous generating system of $R_{\mathbb{C}}$ was determined by Domokos \cite{domokos}. 
The generators have degree  $2$ and $4$. (As a byproduct of our calculations here we get a new proof of this statement, see Theorem~\ref{thm:odd-char}.) 
It was pointed out by Domokos and Drensky \cite{domokos-drensky} that the result remains valid for any infinite field $\F$ with 
$\mathrm{char}(\F)\neq 2$ thanks to the characteristic free description of vector invariants of the special orthogonal group due to De Concini and Procesi \cite{deconcini-procesi}.  
Apart from that a minimal homogeneous generating system of $R_{\F}$ is known only when $n=m=3$ by \cite[Theorem 4.3]{domokos:3x3} for $\mathrm{char}(\F)=0$ (see also  \cite[Proposition 11.49]{mukai})  and 
by \cite[Theorem 3]{domokos-drensky:2} for arbitrary infinite $\F$ or for $R_{\Z}$.\footnote{After publication of the present work came to my attention that this account is not complete, see Section~\ref{sec:addendum} at the end of the paper.} 

In the present paper we determine a minimal homogeneous generating system of $R_{\F}$ when $n=2$ and $\mathrm{char}(\F)=2$ (see Theorem~\ref{thm:main}). 
Simultaneously we get in Theorem~\ref{thm:2x2-integral} a minimal system of $\Z$-algebra generators in the $\Z$-form of $\mathcal{O}((\mathbb{Q}^{2\times 2})^m)^{SL_2\times SL_2}$. An irredundant separating system of $SL_2\times SL_2$-invariants on $(\mathbb{F}^{2\times 2})^m$ is provided for an algebraically closed base field of arbitrary characteristic   in Theorem~\ref{thm:separating}. 

Additional interest in the calculations of this paper stems from a connection to the open problem of generating vector invariants of the orthogonal group in characteristic two. 
This is discussed in Section~\ref{sec:orthogonal}, where we deduce that 
in the $4$-dimensional case, the exotic vector invariants of $SO(4,\F)$ (with $\mathrm{char}(\F)=2$) constructed by Domokos and Frenkel \cite{domokos-frenkel:2} together with the well-known  quadratic invariants generate the corresponding algebra of invariants. 

\section{The main result} 

Denote by $x_{ijk}\in \mathcal{O}(M)$ the function mapping 
$(A_1,\dots,A_m)\in M$ to the $(i,j)$-entry of $A_k\in \F^{n\times n}$. 
Then $\mathcal{O}(M)$ is the $mn^2$-variable polynomial algebra  
$\F[x_{ijk}\mid 1\le i,j\le n,\ k=1,\dots,m]$. 
Write $x_k$ for the $n\times n$ matrix over $\mathcal{O}(M)$ whose $(i,j)$-entry is 
$x_{ijk}$ ($k=1,\dots,m$; $1\le i,j\le n$). So $x_1,\dots,x_m$ (called sometimes \emph{generic matrices}) are elements of the noncommutative ring $\mathcal{O}(M)^{n\times n}$.  
The algebra $\mathcal{O}(M)$ is graded in the standard way, namely the generators $x_{ijk}$ have degree $1$. The subalgebra $R$ is generated by homogeneous elements. Write $R_+$ for the subspace of $R$ spanned by its homogeneous elements of positive degree (clearly $R_+$ is an ideal in $R$ and 
$R/R_+\cong\F$). By the graded Nakayama lemma some homogeneous elements in $R_+$ form a minimal homogeneous $\F$-algebra generating system of $R$ if and only if they constitute an $\F$-vector space basis in an $\F$-vector space direct complement of the ideal 
$(R_+)^2$ in $R_+$. A homogeneous semi-invariant $f\in R_+$ is said to be \emph{indecomposable} if $f$ is not contained in $(R_+)^2$ (equivalently, $f$ can not be expressed as a polynomial of semi-invariants of strictly smaller degree). 

Now suppose that $n=2$, and let us construct some semi-invariants. 
Clearly the determinant $\det(x_k)$ is a semi-invariant for $k=1,\dots,m$, and we also have its linearization; that is,  
for $1\le k,l\le m$ define $\langle x_k\vert x_l\rangle$ by the equality 
\[\det(z x_k+w x_l)=z^2\det(x_k)+zw \langle x_k\vert x_l\rangle
+w^2\det(x_l)\in \mathcal{O}(M)[z,w] \] 
(where $z,w$  are additional  commuting variables). 
That is, 
\[\langle x_k\vert x_l\rangle=x_{11k}x_{22l}+x_{11l}x_{22k}-x_{12k}x_{21l}-x_{12l}x_{21k}.\] 
Fix an integer  $q$ with $4\le 2q\le m$,  take $2q$ additional commuting variables $z_1,\dots,z_q,w_1,\dots,w_q$, and define 
$\xi_q=\xi_q(x_1,\dots,x_{2q})$ as the coefficient of $z_1\cdots z_qw_1\cdots w_q$ in the determinant 
\begin{equation}\label{eq:xi-def}\left|\begin{array}{ccccc}z_1x_1 & w_1x_{q+1} &  &  &  \\ & z_2x_2 & w_2x_{q+2} &   &  \\  &  & z_{3}x_3 & \ddots &  \\  &  &  & \ddots & w_{q-1}x_{2q-1} \\ w_qx_{2q} &  &   &  & z_qx_q\end{array}\right|\in\mathcal{O}((\F^{2\times 2})^m)[z_1,\dots,z_q,w_1,\dots,w_q]\end{equation} 
where we put the $2\times 2$ zero matrix in those positions of the above $q\times q$ 
block matrix that are not indicated (so in \eqref{eq:xi-def} we take the determinant of a $2q\times 2q$ matrix over $\mathcal{O}(M)[z_1,\dots,z_q,w_1,\dots,w_q]$). 
If $y_1,\dots,y_{2q}$ are $n\times n$ matrices with entries in an arbitrary commutative 
$\F$-algebra $A$, we shall write $\xi_q(y_1,\dots,y_{2q})$ for the image of $\xi_q(x_1,\dots x_{2q})$ under the unique $\F$-algebra homomorphism 
from $\F[x_{ijk}\mid 1\le i,j\le n,\ k=1,\dots,2q]\to A$ that maps $x_{ijk}$ to the $(i,j)$-entry of $y_k$. Now we are in position to state the main results of the paper:  

\begin{theorem}\label{thm:main} 
Let $\F$ be an infinite field of characteristic $2$. Then the following set is a minimal homogeneous generating system of the algebra $\mathcal{O}((\F^{2\times 2})^m)^{SL_2\times SL_2}$:  
\begin{align}\label{eq:2x2-generators} \{\det(x_k),\ \langle x_l\vert x_r\rangle ,\ \xi(x_{k_1},\dots,x_{k_{2q}}) 
\mid 
k=1,\dots,m; \ 1\le l<r\le m;\ 
\\ \notag 2\le q\le \frac m2; \ 1\le k_1<\cdots <k_{2q}\le m\}.
\end{align}
\end{theorem}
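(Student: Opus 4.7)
I would split Theorem~\ref{thm:main} into a generation part (the elements of \eqref{eq:2x2-generators} generate $R_\F$ as an $\F$-algebra) and a minimality part (no element of \eqref{eq:2x2-generators} lies in $(R_+)^2$). Throughout I would work over the $\Z$-form $R_\Z$, invoking Theorem~\ref{thm:Z-fingen} and Proposition~\ref{prop:integral-invariants} so that it suffices to prove both statements in $R_\Z/2R_\Z$ and then read off the result for an arbitrary infinite field $\F$ of characteristic $2$. A preliminary check, exploiting the block-diagonal $SL_2\times SL_2$-conjugation of the matrix in \eqref{eq:xi-def} by $\mathrm{diag}(g,\ldots,g)$ and $\mathrm{diag}(h,\ldots,h)$, confirms that each $\xi_q$ is indeed a semi-invariant.

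\emph{Generation.} Over a field with $\mathrm{char}(\F)\ne 2$ the algebra $R_\F$ is generated by $\det(x_k)$, $\langle x_k\vert x_l\rangle$ and specific degree-$4$ invariants by Domokos~\cite{domokos} and Domokos--Drensky~\cite{domokos-drensky}, and a direct linearization shows that $\xi_2$ agrees modulo $(R_+)^2$ with these degree-$4$ generators. The key step is then to prove, by expanding the block determinant~\eqref{eq:xi-def}, a $\Z$-linear identity of the shape
\[ 2\,\xi_q(x_1,\ldots,x_{2q})\ \equiv\ P_q \pmod{(R_{\Z,+})^2} \]
for $q\ge 3$, where $P_q$ is a $\Z$-polynomial in the degree-$2$ invariants and in the $\xi_{q'}$ with $q'<q$. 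After inverting $2$ this identity recovers the known characteristic-$0$ decomposability of $\xi_q$; modulo $2$ the left-hand side vanishes and the identity yields no decomposition of $\xi_q$ in characteristic $2$. Combining the identity with the characteristic-free spanning theorem of Derksen--Weyman~\cite{derksen-weyman} and Domokos--Zubkov~\cite{domokos-zubkov} and inducting on $q$, every semi-invariant of $R_\F$ in characteristic $2$ becomes expressible as a polynomial in \eqref{eq:2x2-generators}.

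\emph{Minimality.} Using the standard $\Z^m_{\ge 0}$-multigrading of $\mathcal{O}(M)$ (where $x_{ijk}$ has multidegree $\mathbf{e}_k$), a careful inspection of \eqref{eq:xi-def} shows that in each nonzero term of the expansion exactly one scalar entry is picked from each $2\times 2$ block, hence $\xi_q(x_{k_1},\ldots,x_{k_{2q}})$ is multilinear in the matrices $x_{k_1},\ldots,x_{k_{2q}}$ and has multidegree $\mathbf{e}_{k_1}+\cdots+\mathbf{e}_{k_{2q}}$. A product in $(R_+)^2$ of this multidegree must be a sum of products of two multihomogeneous semi-invariants whose multidegrees partition $\{k_1,\ldots,k_{2q}\}$ into two non-empty even-cardinality subsets, so the minimality question is reduced to a finite-dimensional comparison between the multilinear component of $R$ and the span of such products in it. To conclude $\xi_q\notin (R_+)^2$ for $q\ge 2$ in characteristic $2$, I would either compute the two dimensions explicitly in low multidegrees using the characteristic-free spanning set, or exhibit an $\F$-linear functional on the multilinear component that vanishes on all admissible products of $\langle x\vert x\rangle$'s and smaller $\xi_{q'}$'s but is non-zero on $\xi_q$; the cyclic structure of the block determinant suggests specializing each $x_{k_s}$ to matrices following the cyclic pattern of \eqref{eq:xi-def}.

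\textbf{Main obstacle.} The hardest step is establishing the $\Z$-level identity for $2\,\xi_q$ and controlling the residue $P_q$ modulo $(R_{\Z,+})^2$; this is a genuine $2q\times 2q$ block-determinant calculation that must be made compatible with the $SL_2\times SL_2$-action. A natural tool is the Cayley--Hamilton relation $x_k^2-\tr(x_k)x_k+\det(x_k)=0$ for $2\times 2$ matrices, together with the observation that $\tr(x_k)$ is not itself a semi-invariant and hence may appear only inside $SL_2\times SL_2$-invariant combinations. Once this identity is in place, both generation and minimality will follow from the multigraded bookkeeping sketched above.
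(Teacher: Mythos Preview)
Your generation argument has a structural gap. The identity $2\,\xi_q\equiv P_q\pmod{(R_{\Z,+})^2}$ that you propose concerns the decomposition of $\xi_q$ itself, but $\xi_q$ is already one of the proposed generators; knowing how $\xi_q$ decomposes after inverting $2$ (and that reducing modulo $2$ kills the identity) tells you nothing about whether the set \eqref{eq:2x2-generators} generates $R_\F$ in characteristic $2$. What must actually be shown is that every element of the spanning set in Theorem~\ref{thm:semi-invariants}, i.e.\ every $\coef(t^\alpha,\det(x\otimes t))$, lies in the subalgebra generated by \eqref{eq:2x2-generators}. Your plan never addresses an arbitrary such coefficient: you only discuss the specific elements $\xi_q$, and the step ``combining the identity with the spanning theorem and inducting on $q$'' has no content, because the identity relates $\xi_q$ to lower $\xi_{q'}$, not a general $\coef(t^\alpha,\cdot)$ to anything.

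The paper proceeds differently. Lemma~\ref{lemma:1} analyzes an arbitrary indecomposable coefficient $\coef(t^\alpha,\det(x\otimes t))$: Laplace expansion forces $\sum_k\alpha_{rsk}\le 1$ for all $(r,s)$, and then K\H onig's matching theorem on the support of $\alpha$ forces the block pattern to be, up to row/column permutation, exactly the cyclic pattern of \eqref{eq:xi-def}, so the coefficient equals $\pm\xi_q(x_{k_1},\dots,x_{k_{2q}})$ for some (not necessarily distinct) indices. Lemma~\ref{lemma:2} then shows, by a block column operation and a multidegree argument, that modulo $(R_+)^2$ the element $\xi_q$ is alternating in its $2q$ arguments and vanishes when two coincide; this cuts the list down to strictly increasing index tuples. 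Neither of these lemmas is replaced by your ``$2\xi_q\equiv P_q$'' device. For minimality you locate the correct multigraded reduction, but both alternatives you offer (``compute dimensions'' or ``exhibit a functional'') are left as programmes; in the paper this step is not re-proved but quoted from \cite[Proposition~2.5]{domokos-frenkel:1}, where an explicit specialization witnesses $\xi_q\notin(R_+)^2$ in characteristic~$2$.
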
 

\begin{theorem} \label{thm:2x2-integral}
The ring $\Z[x_{ijk}\mid 1\le i,j\le 2,\ k=1,\dots,m]\cap \mathcal{O}((\mathbb{Q}^{2\times 2})^m)^{SL_2\times SL_2}$ is minimally generated by the elements \eqref{eq:2x2-generators} (defined over $\F=\mathbb{Q}$). 
\end{theorem}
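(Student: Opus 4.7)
The plan is to deduce Theorem~\ref{thm:2x2-integral} from Theorem~\ref{thm:main}, the odd-characteristic analogue Theorem~\ref{thm:odd-char}, and the $\Z$-form base-change machinery set up in Section~\ref{sec:Z-form}. Membership of \eqref{eq:2x2-generators} in $R_\Z := \Z[x_{ijk}]\cap\mathcal{O}((\mathbb{Q}^{2\times 2})^m)^{SL_2\times SL_2}$ is immediate, since each listed generator is by construction a polynomial in the $x_{ijk}$ with integer coefficients.

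For generation, write $S'\subseteq R_\Z$ for the $\Z$-subalgebra generated by \eqref{eq:2x2-generators}; each graded piece $(R_\Z/S')^d$ is a finitely generated abelian group, as $R_\Z^d$ sits inside the free $\Z$-module $\Z[x_{ijk}]^d$. Since a finitely generated abelian group $M$ with $M=pM$ for every prime $p$ is divisible and hence zero, it suffices to show $(R_\Z/S')\otimes_\Z\F_p=0$ for every prime $p$. Applying Proposition~\ref{prop:integral-invariants} over the infinite field $\overline{\F}_p$ gives $R_\Z\otimes_\Z\overline{\F}_p=R_{\overline{\F}_p}$, and the image of $S'\otimes\overline{\F}_p$ inside it is the $\overline{\F}_p$-subalgebra generated by the elements of \eqref{eq:2x2-generators}. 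For $p=2$ this subalgebra equals $R_{\overline{\F}_p}$ by Theorem~\ref{thm:main}, while for odd $p$ the same conclusion already holds using only the degree-$2$ invariants and the $\xi_2$'s, by Theorem~\ref{thm:odd-char}. Hence $S'\otimes\overline{\F}_p\twoheadrightarrow R_\Z\otimes\overline{\F}_p$; by faithful flatness of $\overline{\F}_p$ over $\F_p$ this yields $S'\otimes\F_p\twoheadrightarrow R_\Z\otimes\F_p$, and so $(R_\Z/S')\otimes\F_p=0$ as required.

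For minimality, suppose that some $g\in$ \eqref{eq:2x2-generators} were a $\Z$-polynomial in the remaining elements of that set. Reducing the identity modulo $2$ would express the image of $g$ in $R_{\overline{\F}_2}$ as an $\overline{\F}_2$-polynomial in the images of the others, contradicting the minimality clause of Theorem~\ref{thm:main}; note that a possible $\mathbb{Q}$-polynomial dependency of $g$ on the other generators (which in characteristic zero does occur for $g=\xi_q$ with $q\ge 3$) is no obstruction, because after clearing denominators such an identity need not descend to one over $\overline{\F}_2$. The main technical obstacle has already been dealt with in the earlier sections: the base-change identity $R_\Z\otimes_\Z\F=R_\F$ for infinite $\F$ (Proposition~\ref{prop:integral-invariants}) and the minimal characteristic-two generation result (Theorem~\ref{thm:main}); granted these, Theorem~\ref{thm:2x2-integral} reduces to the short formal argument above.
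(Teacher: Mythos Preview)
Your overall strategy coincides with the paper's: both arguments reduce generation of $R_{\Z}$ to the statement that the $\F$-span of the subring $S'$ generated by \eqref{eq:2x2-generators} equals $R_{\F}$ for enough infinite fields $\F$, and both deduce minimality by reducing modulo $2$ and invoking Theorem~\ref{thm:main}. In fact you are essentially reproving Lemma~\ref{lemma:SR} inline (with the pleasant streamlining that checking only the fields $\overline{\F}_p$, and not $\mathbb{Q}$, already forces a finitely generated abelian group to vanish).

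There is, however, a genuine gap in one step. You assert that Proposition~\ref{prop:integral-invariants} gives the identification $R_{\Z}\otimes_{\Z}\overline{\F}_p=R_{\overline{\F}_p}$. It does not: Proposition~\ref{prop:integral-invariants}(i) says only that the natural map $R_{\Z}\otimes_{\Z}\overline{\F}_p\to R_{\overline{\F}_p}$ is \emph{surjective}. Your argument, however, needs it to be \emph{injective}. Indeed, from the surjectivity of the composite $S'\otimes\overline{\F}_p\to R_{\Z}\otimes\overline{\F}_p\to R_{\overline{\F}_p}$ one cannot conclude that the first arrow is onto unless the second arrow has trivial kernel; and it is precisely surjectivity of $S'\otimes\overline{\F}_p\to R_{\Z}\otimes\overline{\F}_p$ that you need to kill $(R_{\Z}/S')\otimes\overline{\F}_p$.

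The missing injectivity is exactly what the paper supplies in the proof of Lemma~\ref{lemma:SR} (see \eqref{eq:iso}): since $R_{\Z}=\mathcal{O}_{\Z}\cap R_{\mathbb{Q}}$, the quotient $\mathcal{O}_{\Z}/R_{\Z}$ is torsion-free, so each $R_{\Z}^{(d)}$ is a $\Z$-module direct summand of $\mathcal{O}_{\Z}^{(d)}$, and hence $R_{\Z}\otimes\F\hookrightarrow\mathcal{O}_{\Z}\otimes\F=\mathcal{O}_{\F}$ is injective with image $R_{\F}$. Once you insert this observation (or simply cite Lemma~\ref{lemma:SR} directly, as the paper does), your proof is complete and essentially identical to the paper's.
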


\section{A spanning set of semi-invariants} 

We need to develop some notation in order to recall a known $\F$-vector space spanning set of $R$. Given an $n\times n$ matrix $a=(a_{ij})_{i,j=1}^n$ and a $q\times q$ matrix 
$b=(b_{rs})_{r,s=1}^q$ with entries in some commutative ring $S$ we write $a\otimes b\in S^{nq\times nq}$ for their \emph{Kronecker product}. That is, $a\otimes b$ is 
 a $q\times q$ block matrix with blocks of size $n\times n$, and the block in the $(r,s)$ position is $b_{rs}a\in S^{n\times n}$. 

In the following discussion $n$ is arbitrary, and recall that $M=(\F^{n\times n})^m$. 
Fix another positive integer $q$ and set $N:=(\F^{q\times q})^m$. 
Then the coordinate ring of $M\times N$ is 
\[\mathcal{O}(M\times N)=\mathcal{O}(M)\otimes\mathcal{O}(N)=\F[x_{ijk},t_{rsk}\mid 1\le i,j\le n;\ 1\le r,s\le q; \ k=1,\dots,m],\] 
where $t_{rsk}$ is the function mapping an element $(B_1,\dots,B_q)\in N$ to the $(r,s)$-entry of $B_k$.  
For $k=1,\dots ,m$ set 
\[t_k:=(t_{rsk})_{1\le r,s\le q}\in \mathcal{O}(N)^{q\times q},\quad 
\mbox{ and }t:=(t_1,\dots,t_m).\] 
Then $x_k$ and $t_k$ are both matrices over $\mathcal{O}(M\times N)$, 
so their Kronecker product $x_k\otimes t_k$ is an $nq\times nq$ matrix with entries in the ring $\mathcal{O}(M\times N)$. 
We shall write 
\[x\otimes t:=x_1\otimes t_1+\cdots +x_m\otimes t_m \in 
\mathcal{O}(M\times N)^{nq\times nq}.\] 
More generally, for any $m$-tuple $u=(u_1,\dots,u_m)$ of $q\times q$ matrices over $\mathcal{O}(N)$ we shall use the notation 
\[x\otimes u:=x_1\otimes u_1+\cdots+x_m\otimes u_m\in \mathcal{O}(M\times N)^{nq\times nq}.\] 

For $\alpha\in (\mathbb{N}_0^{q\times q})^m$ set 
\[t^{\alpha}:=\prod_{r,s,k}t_{rsk}^{\alpha_{rsk}}\in \mathcal{O}(N)\subset \mathcal{O}(M\times N).\] 
An element $f\in \mathcal{O}(M,N)$ can be viewed as a polynomial over $\mathcal{O}(M)$ with variables $t_{rsk}$, and denote by 
$\coef(t^{\alpha},f)\in \mathcal{O}(M)$ the coefficient of $t^{\alpha}$ in $f$: 
\[f=\sum_{\alpha\in (\mathbb{N}_0^{q\times q})^m} \coef(t^{\alpha},f)t^{\alpha}.\] 

A well known construction of semi-invariants is based on the multiplicativity of the determinant: 

\begin{lemma}\label{lemma:semiinvariant} 
For any $u\in (\mathcal{O}(N)^{q\times q})^m$ and 
$\alpha\in (\mathbb{N}_0^{q\times q})^m$ we have that 
$\coef(t^{\alpha},\det(x\otimes u))$ belongs to $R$. 
\end{lemma}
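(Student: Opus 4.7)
The plan is to extend the $SL_n\times SL_n$-action from $\mathcal{O}(M)$ to $\mathcal{O}(M\times N)=\mathcal{O}(M)\otimes\mathcal{O}(N)$ by letting it act trivially on the factor $\mathcal{O}(N)$, and to show that $\det(x\otimes u)$ is invariant under this extended action. Once this is done, writing $\det(x\otimes u)=\sum_\alpha \coef(t^\alpha,\det(x\otimes u))\,t^\alpha$ as an expansion with coefficients in $\mathcal{O}(M)$ and invoking the fact that the monomials $t^\alpha$ form an $\mathcal{O}(M)$-basis of $\mathcal{O}(M\times N)$ immediately forces each coefficient to lie in $\mathcal{O}(M)^{SL_n\times SL_n}=R$.

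For the invariance of $\det(x\otimes u)$, the key observation is that $u\in(\mathcal{O}(N)^{q\times q})^m$ is fixed pointwise by the action. Using the compatibility $(AB)\otimes(CD)=(A\otimes C)(B\otimes D)$ of the Kronecker product with matrix multiplication, I would compute
\[
(gx_kh^{-1})\otimes u_k=(g\otimes I_q)(x_k\otimes u_k)(h^{-1}\otimes I_q),
\]
where $I_q$ is the $q\times q$ identity matrix. Summing over $k=1,\dots,m$ gives
\[
(g,h)\cdot(x\otimes u)=(g\otimes I_q)(x\otimes u)(h^{-1}\otimes I_q).
\]

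Taking determinants and using multiplicativity, together with the standard identity $\det(g\otimes I_q)=\det(g)^q$ (and likewise for $h^{-1}$), the scalar factors are $\det(g)^q=1$ and $\det(h^{-1})^q=1$ because $g,h\in SL_n$. Hence $\det(x\otimes u)$ is $SL_n\times SL_n$-invariant, completing the argument.

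I do not anticipate any real obstacle: the lemma is essentially a formal consequence of the multiplicativity of the determinant combined with the Kronecker product identity and the fact that $SL_n$-elements have determinant one. The only subtle point is to keep track of the two commuting actions (the group acting only on $x$'s, not on $t$'s) so that the coefficient extraction in step two produces elements of the invariant ring rather than elements of a larger invariant ring involving $\mathcal{O}(N)$.
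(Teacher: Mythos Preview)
Your proposal is correct and follows essentially the same approach as the paper: you conjugate $x\otimes u$ by $g\otimes I_q$ and $h^{-1}\otimes I_q$ (which the paper writes as $\mathrm{diag}(g,\dots,g)$ and $\mathrm{diag}(h,\dots,h)^{-1}$), use multiplicativity of the determinant together with $\det(g)=\det(h)=1$, and read off the invariance of each $t^\alpha$-coefficient. The only cosmetic difference is that you phrase the last step via the $\mathcal{O}(M)$-basis $\{t^\alpha\}$ of $\mathcal{O}(M\times N)$, while the paper simply says the coefficients are invariant.
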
 

\begin{proof} 
For $g\in SL_n$ write $\mathrm{diag}(g,\dots,g)$ for the element of $SL_{qn}$ obtained by 
glueing $q$ copies of $g$ along the diagonal. Given $(g,h)\in SL_n\times SL_n$ we have the matrix equality 
\[\mathrm{diag}(g^{-1},\dots,g^{-1})\cdot x\otimes u \cdot \mathrm{diag}(h,\dots,h)
=\sum_{k=1}^m(g^{-1}x_kh)\otimes u.\] 
Take determinants, and use the multiplicativity of the determinant to conclude that 
\[\det(x\otimes u)=\det(\sum_{k=1}^m(g^{-1}x_kh)\otimes u).\] 
The latter equality shows that the coefficients of $\det(x\otimes u)$ (viewed as a polynomial in the $t_{rsk}$ with coefficients in $\mathcal{O}(M)$) are $SL_n\times SL_n$-invariant. 
\end{proof} 

Our starting point is the following result of Derksen and Weyman \cite{derksen-weyman}, 
Domokos and Zubkov \cite{domokos-zubkov}, and (in characteristic zero) of 
Schofield and Van den Bergh \cite{schofield-vandenbergh}; in positive characteristic this result was obtained using the theory of modules with good filtration (see  
\cite{donkin:2} and the references there): 

\begin{theorem}\label{thm:semi-invariants} 
Let $\F$ be an infinite field. Then the degree of a non-zero homogeneous element in $R_+$ is a multiple of $n$, and for $q=1,2,\dots$, the degree $nq$ homogeneous component of  the algebra $R$ of matrix semi-invariants is spanned as an $\F$-vector space by 
\begin{equation}\label{eq:dwdzsv}\{\coef(t^{\alpha},\det(x \otimes t))\mid \alpha\in (\mathbb{N}_0^{n\times n})^m \ \mbox{ with }\ 
\sum_{r,s,k}\alpha_{rsk}=nq\}.
\end{equation}  
\end{theorem}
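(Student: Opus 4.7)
The plan is to establish two claims in sequence: first the divisibility $n\mid\deg f$ for every nonzero homogeneous $f\in R_+$, then the spanning statement for the degree $nq$ component.

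For the divisibility, I would extend the action to $GL_n\times GL_n$ by the same formula $(g,h)\cdot(A_1,\dots,A_m)=(gA_kh^{-1})_k$. Since $SL_n\times SL_n$ is normal in $GL_n\times GL_n$ with quotient $\mathbb{G}_m\times\mathbb{G}_m$, any $SL_n\times SL_n$-invariant homogeneous $f$ transforms under $GL_n\times GL_n$ by a character $(g,h)\mapsto\det(g)^a\det(h)^b$ for some $(a,b)\in\mathbb{Z}^2$. Specialising $(g,h)=(tI,I)$ and using the homogeneity of $f$ gives $t^d f=t^{na}f$ for all $t$ in the infinite field $\F$, hence $\deg f=d=na$; the symmetric specialisation yields $d=-nb$, so also $b=-a$. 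This pins the nonzero degrees to multiples of $n$ and identifies the weight lattice of semi-invariants with $(a,-a)$, $a\in\mathbb{Z}$.

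For the spanning claim, my plan is to use the Cauchy/Schur--Weyl framework. Write $V_1,V_2$ for the two copies of $\F^n$ and $W=\F^m$, so that $\mathcal{O}(M)=\mathrm{Sym}(V_1^*\otimes V_2\otimes W^*)$. Grouping the last two factors and applying the Cauchy decomposition $\mathrm{Sym}^{nq}(V_1^*\otimes(V_2\otimes W^*))\cong\bigoplus_{|\lambda|=nq}L_\lambda V_1^*\otimes L_\lambda(V_2\otimes W^*)$ (interpreted as a good filtration in positive characteristic), the requirement that a summand contribute to the $(\det^q,\det^{-q})$-semi-invariants of $GL_n\times GL_n$ forces $\lambda=(q^n)$, leaving a single $GL(W)$-module of known character. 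On the other hand, Lemma~\ref{lemma:semiinvariant} and the polarisation of $u\mapsto\det(x\otimes u)$ furnish a $GL(W)$-equivariant map whose image lies in exactly this isotypic component; a direct highest-weight-vector computation (essentially expanding the determinant as a sum over permutations and picking out a standard tableau on the $(q^n)$ side) shows that each highest-weight line of the target is hit, which by Frobenius/equivariance gives surjectivity.

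The main obstacle is positive characteristic, where $GL_n$ is not linearly reductive and the Cauchy decomposition is only a filtration. I would handle this via Donkin's theorem that $\mathcal{O}(M)$ carries a good filtration as a $GL_n\times GL_n\times GL_m$-module, together with Kempf vanishing, which implies that the $SL_n\times SL_n$-fixed point functor is exact on modules with good filtration and that the graded pieces of the induced filtration on $R$ are computable from the characteristic-zero answer. The highest-weight-vector surjectivity of the previous paragraph, lifted through this formalism (as carried out in \cite{domokos-zubkov} using \cite{donkin:2}), then yields the claimed spanning in arbitrary characteristic.
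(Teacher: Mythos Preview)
The paper does not supply its own proof of this theorem: it is quoted as a known result, attributed to Derksen--Weyman, Domokos--Zubkov, and (in characteristic zero) Schofield--Van den Bergh, with the positive-characteristic case relying on the theory of modules with good filtration (Donkin). So there is no in-paper argument to compare your proposal against; what you have written is essentially an outline of how the cited proofs go.

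Your divisibility argument is standard and basically fine, but as stated it has a small slip: it is not a priori true that an arbitrary homogeneous $SL_n\times SL_n$-invariant $f$ is a $GL_n\times GL_n$ weight vector. What is true is that $R^{(d)}$ is a rational module for the quotient torus and hence decomposes into weight spaces; your specialisation $(g,h)=(tI,I)$ then shows that every weight $(a,b)$ occurring in $R^{(d)}$ satisfies $na=d$ (and symmetrically $nb=-d$), so a posteriori $R^{(d)}$ is a single weight space and your claim follows. You should phrase it that way.

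For the spanning statement your outline is on the right track and matches the strategy of \cite{domokos-zubkov}: Cauchy filtration on $\mathrm{Sym}(V_1^*\otimes V_2\otimes W^*)$, the identification of the relevant $GL_n\times GL_n$ isotypic piece with the Schur/Weyl module $L_{(q^n)}W^*$, and the good-filtration machinery of Donkin to handle positive characteristic. The passage ``a direct highest-weight-vector computation \dots shows that each highest-weight line of the target is hit'' is the crux and is where the actual work lies; as a sketch this is acceptable, but be aware that this step is nontrivial in positive characteristic (Weyl modules need not be irreducible, so hitting a highest-weight line does not automatically give surjectivity without the good-filtration exactness input you mention). Since you explicitly defer to \cite{domokos-zubkov} and \cite{donkin:2} for this, your proposal amounts to the same citation the paper makes, with some added context.
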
 

\section{The main reductions} 

In this section we turn to the case $n=2$, write $M$ for $(\F^{2\times 2})^m$ and  $R$ for 
$\mathcal{O}(M)^{SL_2\times SL_2}$. 

\begin{lemma}\label{lemma:1} Take $q\ge 2$ and 
$\alpha\in (\mathbb{N}_0^{q\times q})^m$ such that $\coef(t^{\alpha},\det(x\otimes t))$ is indecomposable in $R$. Then the following hold: 
\begin{itemize} 
\item[(i)] For all $r,s\in \{1,\dots,q\}$ we have $\sum_{k=1}^m\alpha_{rsk}\in \{0,1\}$. 
\item[(ii)] The semi-invariant $\coef(t^{\alpha},\det(x\otimes t))$ up to sign agrees with 
$\xi(x_{k_1},\dots,x_{k_{2q}})$, where $k_1,\dots,k_{2q}$ are elements of $\{1,\dots,m\}$. 
\end{itemize}
\end{lemma}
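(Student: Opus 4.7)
\medskip
\noindent\emph{Proof plan.} The plan is to expand $\det(x\otimes t)$ by the Leibniz formula on the $2q\times 2q$ matrix whose $((r,i),(s,j))$-entry is $\sum_k t_{rsk}x_{ijk}$, and to read off $\coef(t^\alpha,\det(x\otimes t))$ combinatorially. Set $\beta_{rs}:=\sum_k\alpha_{rsk}$. Since each of the two rows labelled $(r,1),(r,2)$ contributes one linear $t$-factor, one has $\sum_s\beta_{rs}=2=\sum_r\beta_{rs}$ for every $r,s$, so a priori $\beta_{rs}\in\{0,1,2\}$.

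I would prove~(i) by contradiction. Assume $\beta_{r_0s_0}=2$; then the row/column totals force $\beta_{r_0s}=0$ for $s\neq s_0$ and $\beta_{rs_0}=0$ for $r\neq r_0$. Consequently only Leibniz permutations that map $\{(r_0,1),(r_0,2)\}$ bijectively onto $\{(s_0,1),(s_0,2)\}$ contribute to the coefficient, and such permutations factor as a local permutation on the $(r_0,s_0)$-block times a permutation on the complementary $(q-1)\times(q-1)$ block matrix $y\otimes u$. Summing the two local permutations with signs produces the block determinant, yielding
\[\coef(t^\alpha,\det(x\otimes t))=\pm\,\coef\bigl(t^{\alpha''},\det({\textstyle\sum_k}t_{r_0s_0k}x_k)\bigr)\cdot\coef(t^{\alpha'},\det(y\otimes u)),\]
where $\alpha''$ and $\alpha'$ denote the restrictions of $\alpha$ to the block $(r_0,s_0)$ and to its complement. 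The identity
\[\det({\textstyle\sum_k}t_{r_0s_0k}x_k)={\textstyle\sum_k}t_{r_0s_0k}^2\det(x_k)+{\textstyle\sum_{k<l}}t_{r_0s_0k}t_{r_0s_0l}\langle x_k\vert x_l\rangle\]
shows the first factor is some $\det(x_k)$ or $\langle x_k\vert x_l\rangle$, and Lemma~\ref{lemma:semiinvariant} (applied to the smaller block matrix) shows the second factor is a semi-invariant of positive degree (using $q\ge 2$). Both factors then lie in $R_+$, so $\coef(t^\alpha,\det(x\otimes t))\in(R_+)^2$, contradicting indecomposability.

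For~(ii), once (i) gives $\beta_{rs}\in\{0,1\}$, the support $S=\{(r,s):\beta_{rs}=1\}$ is the edge set of a $2$-regular bipartite graph on $\{1,\dots,q\}\sqcup\{1,\dots,q\}$, hence a disjoint union of even cycles. If $S$ had two or more cycles, relabeling the block rows and columns along the cycle partition (which is merely a relabeling of the $t$-variables) would make $x\otimes t$ block-diagonal, and the isolation argument above would factor $\coef(t^\alpha,\det(x\otimes t))$ as a product of $\ge 2$ positive-degree semi-invariants, again contradicting indecomposability. Hence $S$ is a single $2q$-cycle; listing its cells in cyclic order and letting $k_1,\dots,k_{2q}$ be the $x$-indices chosen at these successive cells identifies $\coef(t^\alpha,\det(x\otimes t))$ with $\pm\,\xi_q(x_{k_1},\dots,x_{k_{2q}})$, as desired.

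I expect the main obstacle to be the careful permutation-sign bookkeeping in the block-isolation step—organizing the restricted Leibniz sum into the ``local'' block determinant times the complementary minor determinant with a coherent overall sign—and the matching identification of the single-cycle coefficient with~$\xi_q$ after the cyclic reindexing of blocks.
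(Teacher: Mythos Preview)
Your proposal is correct and follows essentially the same approach as the paper: both parts use the block structure of $x\otimes t$ to produce a forbidden factorization into two elements of $R_+$, and the single-cycle reduction then yields $\xi_q$ up to sign. One cosmetic difference: where you observe directly that a $2$-regular bipartite graph is a disjoint union of even cycles, the paper invokes K\H onig's theorem to split the support into two disjoint perfect matchings (permutations $\sigma$ and $\pi$) and then argues that $\pi\sigma^{-1}$ is a single $q$-cycle---your formulation is slightly more direct but amounts to the same combinatorics.
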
 

\begin{proof} (i) 
When we expand the determinant of $x\otimes t$, 
each of the $(2q)!$ terms contains at most two entries from each $2\times 2$ block of $x\otimes t$. Since $\coef(t^{\alpha},\det(x\otimes t))\neq 0$, it follows that for all $r,s\in \{1,\dots,q\}$ we have $\sum_{k=1}^m\alpha_{rsk}\le 2$. 
Suppose contrary to our statement (i) that  $\sum_{k=1}^m\alpha_{rsk}=2$ for some $r,s$. By symmetry we may assume  that $r=s=1$.  Laplace expansion of the determinant of 
$x\otimes t$ along the first two rows shows that 
\[\coef(t^{\alpha},\det(x\otimes t))=\coef(\prod_{k=1}^mt_{11k}^{\alpha_{11k}},
\det(\sum_{k=1}^mt_{11k}x_k))\cdot \coef(\prod_{r,s=2}^q\prod_{k=1}^mt_{rsk}^{\alpha_{rsk}}, \det(x\otimes u)),\] 
where $u\in (\mathcal{O}(N)^{(q-1)\times (q-1)})^m$ is obtained from $t=(t_1,\dots,t_m)$ by removing the first row and column of each component $t_k\in \mathcal{O}(N)^{q\times q}$. 
The two factors on the right hand side above both belong to $R_+$ by Lemma~\ref{lemma:semiinvariant}. 
Since $\coef(t^{\alpha},\det(x\otimes t))$ is indecomposable, this is a contradiction. 
Thus $\sum_{k=1}^m\alpha_{rsk}\le 1$ for all $r,s$. 

(ii) Consider the set of pairs $P:=\{(r,s)\mid \sum_{k=1}^m\alpha_{rsk}=1\}$. 
Each of the $(2q)!$ terms of the expansion of the determinant of the matrix $x\otimes t$ contains exactly two factors from the first two rows (respectively columns), two factors from 
the second two rows (respectively columns), etc. Taking into account (i), we get that for each 
$r=1,\dots,q$ we have 
$|P\cap \{(r,1),(r,2),\dots,(r,q)\}|=2$, and  
 for each 
$s=1,\dots,q$ we have $|P\cap \{(1.s),(2,s),\dots,(q,s)\}|=2$. 
A theorem of K\H onig \cite{konig} (asserting that a regular bipartite graph has a perfect matching) implies that there exist permutations $\sigma,\pi$ of the set $\{1,2,\dots,q\}$ such that $\sigma(r)\neq \pi(r)$ for each $r=1,\dots,q$, and 
$P=\{(r,\sigma(r)),(r,\pi(r))\mid r=1,\dots,q\}$. By symmetry (or simultaneously renumbering the rows of the matrix components of $t$) we may assume that 
$\sigma$ is the identity permutation, and so $\pi$ is fix point free. We claim that $\pi$ is 
in fact a $q$-cycle. Suppose to the contrary that $\pi$ preserves a proper non-empty subset 
of $\{1,\dots,q\}$. Again after a possible simultaneous renumbering of both the rows and columns of the matrix components of $t$ we may assume that $\pi$ preserves 
the subset $\{1,\dots,d\}$ (and then necessarily $\pi$ preserves the complement 
$\{d+1,\dots,q\}$). This means that each component $\alpha_k\in \mathbb{N}_0^{q\times q}$ of $\alpha\in (\mathbb{N}_0^{q\times q})^m$ is block diagonal, so 
$\alpha_k=\mathrm{diag}(\beta_k,\gamma_k)$, where $\beta_k\in\mathbb{N}_0^{d\times d}$ 
and $\gamma_k\in\mathbb{N}_0^{(q-d)\times (q-d)}$. 
Set $u=(u_1,\dots,u_m)$, where $u_k$ is the left upper $d\times d$ block of $t_k$, 
and set $v=(v_1,\dots,v_m)$, where $v_k$ is the complementary block to $u_k$ in $t_k$. 
The Laplace expansion of the determinant of $x\otimes t$ along the first $2d$ rows shows that 
\[\coef(t^{\alpha},\det(x\otimes t))=\coef(t^{\beta},\det(x\otimes u))\cdot 
\coef(t^{\gamma},\det(x\otimes v)),\] 
where $\beta=(\beta_1,\dots,\beta_m)\in (\mathbb{N}_0^{d\times d})^m$ and 
$\gamma=(\gamma_1,\dots,\gamma_m)\in (\mathbb{N}_0^{(q-d)\times (q-d)})^m$. 
Both factors on the right hand side above belong to $R_+$ by Lemma~\ref{lemma:semiinvariant}, so this is a contradiction. This proves our claim that $\pi$ is a $q$-cycle. 
Moreover, after a possible simultaneous renumbering of both the rows and columns of all components of $t$ we may assume that $\pi$ is the $q$-cycle $(1,2,\dots,q)$. 
Thus there exist unique elements $k_1,\dots,k_{2q}\in \{1,\dots,m\}$ such that 
\begin{align*}
\alpha_{rsk}=\begin{cases} 1 &\mbox{ for }(r,s,k)\in \{(r,r,k_r)
\mid r=1,\dots,q\} \\
1 & \mbox{ for }(r,s,k)\in \{(r,r+1,k_{q+r}) \mid r=1,\dots,q-1\} \\
1&  \mbox{ for }(r,s,k)=(q,1,k_{2q})\\
0 &\mbox{ otherwise}.\end{cases}
\end{align*}
For this $\alpha$ we have $\coef(t^{\alpha},\det(x\otimes t))=\xi(x_{k_1},\dots,x_{2q})$. 
Since in the process of reducing to this special $\alpha$ we had to renumber rows and columns of the components of $t$, the original semi-invariant we started with agrees with this latter up to sign. 
\end{proof} 

For $f,h\in R$ write $f\equiv h$ if $f-h\in (R_+)^2$. 
In particular, a homogeneous element of $R$ is not indecomposable if and only if 
$f\equiv 0$. 

\begin{lemma}\label{lemma:2} Let $q\ge 2$ be an integer with $2q\le m$. 
\begin{itemize} 
\item[(i)] For any permutation $\pi$ of the set $\{1,\dots,2q\}$ we have 
\[\xi(x_{\pi(1)},\dots,x_{\pi(2q)})\equiv \mathrm{sign}(\pi) \xi(x_{\pi(1)},\dots,x_{\pi(2q)}).\] 
\item[(ii)] If $k_1,\dots,k_{2q}\in \{1,\dots,m\}$ are not distinct, then 
$\xi(x_{k_1},\dots,x_{k_{2q}})\equiv 0$. 
\end{itemize}
\end{lemma}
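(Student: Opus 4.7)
The plan is to prove part (ii) first via an explicit matrix-theoretic decomposition, and then deduce part (i) from (ii) by a polarization argument.

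For part (ii), suppose $k_i = k_j = k$ and let $B$ be the $2q \times 2q$ block matrix whose determinant defines $\xi(x_{k_1}, \ldots, x_{k_{2q}})$. I would exploit the characteristic-free $2 \times 2$ identity $\mathrm{adj}(x_k)\,x_k = \det(x_k)\,I$: left-multiplying the two block rows of $B$ containing $x_k$ by $\mathrm{adj}(x_k)$ (via a block-diagonal matrix $D$ with $\det(D) = \det(x_k)^2$) produces a matrix $DB$ whose diagonal blocks at those positions become $z_{\bullet}\det(x_k)\,I$. The identity $\det(DB) = \det(x_k)^2\det(B)$, upon extracting the coefficient of $z_1\cdots z_q w_1\cdots w_q$ and cancelling the non-zero-divisor $\det(x_k)^2$ in the polynomial ring $\mathcal{O}(M)$, gives an explicit formula for $\xi(x_{k_1},\ldots,x_{k_{2q}})$. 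Using the bilinear expansion $\det(U+V) = \det(U)+\det(V)+\langle U \vert V \rangle$ together with the $2\times 2$ identity
\[
\mathrm{tr}\bigl(\mathrm{adj}(A)\,C\,\mathrm{adj}(A)\,B\bigr) = \langle A \vert B \rangle\langle A \vert C \rangle - \det(A)\langle B \vert C \rangle,
\]
this formula expresses $\xi(x_{k_1},\ldots,x_{k_{2q}})$ as a linear combination of products of strictly lower-degree semi-invariants of the form $\det(x_l)$, $\langle x_a \vert x_b \rangle$, and $\xi_{q'}$ with $q' < q$---that is, an element of $(R_+)^2$.

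For part (i), it suffices to verify antisymmetry under an adjacent transposition $\tau = (s, s+1)$, since these generate $S_{2q}$. Multilinearity of $\xi$ in each argument gives
\[
\xi(\ldots, x_s+x_{s+1}, x_s+x_{s+1}, \ldots) = \xi(\ldots, x_s, x_s, \ldots) + \xi(\ldots, x_s, x_{s+1}, \ldots) + \xi(\ldots, x_{s+1}, x_s, \ldots) + \xi(\ldots, x_{s+1}, x_{s+1}, \ldots).
\]
The two ``squared'' terms on the right lie in $(R_+)^2$ by part (ii). The same $\mathrm{adj}$-based argument from (ii) applies to the left-hand side with the generic matrix $x_k$ replaced by $y = x_s + x_{s+1}$, because $\mathrm{adj}(y)\,y = \det(y)\,I$ holds for any $2 \times 2$ matrix $y$ over a commutative ring; the resulting decomposition involves $\det(y)$ and $\langle y \vert \cdot \rangle$, which expand by multilinearity into elements of $R_+$. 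Hence the left-hand side is also in $(R_+)^2$, and the remaining identity $\xi(\ldots, x_s, x_{s+1}, \ldots) + \xi(\ldots, x_{s+1}, x_s, \ldots) \equiv 0$ is precisely the desired antisymmetry.

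The hardest step will be carrying out the decomposition in (ii) uniformly across the three configurations of coinciding indices---both on the diagonal, both on the cycle, or one of each---and for all $q\geq 2$. The diagonal--diagonal case with $q = 2$ yields the cleanest calculation (the $4 \times 4$ determinant splits via $\det\begin{pmatrix} PI & S \\ T & RI \end{pmatrix} = \det(PRI - TS)$), while general $q$ proceeds by Laplace expansion of the complementary $(2q-4) \times (2q-4)$ minor of $DB$, iteratively reducing to smaller-$q$ semi-invariants. Notably, the characteristic-two case requires no separate treatment, since every identity used (the $\mathrm{adj}$ relation, the bilinear expansion of $\det$, and the trace identity above) is characteristic-free.
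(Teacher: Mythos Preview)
Your strategy is genuinely different from the paper's and, as far as I can see, can be pushed through, but it is considerably more laborious, and one step is stated more optimistically than what your computation will actually produce.

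The paper does \emph{not} attack (ii) in full generality first. It singles out one privileged configuration: when the two coinciding arguments sit in the \emph{same block row} of the matrix in the definition of $\xi_q$ (that is, $k_r=k_{q+r}$ for some $r$). In that case a single block-column operation immediately yields the \emph{exact} factorization
\[
\xi_q(y,x_2,\dots,x_q,y,x_{q+2},\dots,x_{2q})=-\det(y)\cdot\xi_{q-1}(x_2,\dots,x_q,x_{q+2},\dots,x_{2q}),
\]
with no adjugate tricks and no case analysis. Setting $y=x_1+x_{q+1}$ and extracting multihomogeneous components then gives simultaneously the antisymmetry under the transposition $(1,q+1)$ (one instance of (i)) and the special case $\xi_q(x_1,\dots,x_1,\dots)\equiv 0$ of (ii). The same argument works for any transposition of two arguments lying in the same block row or the same block column, and these transpositions already generate $S_{2q}$; so (i) follows, and the general case of (ii) is reduced via (i) back to the single case already handled. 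The entire proof thus rests on one elementary identity, verified once.

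Your route instead requires carrying out the adjugate reduction for every configuration of coincidences and every $q$. This does work (for instance in the adjacent-diagonal case $k_1=k_2=k$ one ends up with $\xi_q\equiv\langle x_k\vert x_{k_{q+1}}\rangle\cdot f_1-\det(x_k)\cdot f_2$ for suitable $f_1,f_2\in R_+$), but note that the factors $f_i$ you obtain are not literally of the form $\xi_{q'}(x_{j_1},\dots,x_{j_{2q'}})$: they are $\xi_{q-2}$ evaluated with one argument equal to a triple product such as $x_a\,\mathrm{adj}(x_b)\,x_c$. You then need the extra observation that such a product transforms as $g(\cdot)h^{-1}$ under $SL_2\times SL_2$ (because $\mathrm{adj}(gxh^{-1})=h\,\mathrm{adj}(x)\,g^{-1}$ for $g,h\in SL_2$), so that $\xi_{q-2}(\dots,x_a\,\mathrm{adj}(x_b)\,x_c)$ is indeed a semi-invariant and hence lies in $R_+$. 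This is true but not what you claimed, and it should be stated explicitly. With that addition your argument goes through; without it the inductive step is incomplete.
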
 

\begin{proof} 
We first investigate what happens if we make the substitution $x_{q+1}\mapsto x_1$ in 
$\xi_q$.  
Using standard column operations on matrices (with entries in the field of fractions of the polynomial ring $\mathcal{O}(M)[z_1,\dots,z_q,w_1,\dots,w_q]$) 
we have the following: 
\[\left|\begin{array}{ccccc}z_1x_1 & w_1x_1 &  &  &  \\ & z_2x_2 & w_2x_{q+2} &   &  \\  &  & z_{3}x_3 & \ddots &  \\  &  &  & \ddots & w_{q-1}x_{2q-1} \\ w_qx_{2q} &  &   &  & z_qx_q\end{array}\right|=
\left|\begin{array}{ccccc}z_1x_1 & 0 &  &  &  \\ & z_2x_2 & w_2x_{q+2} &   &  \\  &  & z_{3}x_3 & \ddots &  \\  &  &  & \ddots & w_{q-1}x_{2q-1} 
\\ w_qx_{2q} &-\frac{w_1w_q}{z_1}x_{2q}  &   &  & z_qx_q\end{array}\right|
\]
By Laplace expansion along the first two rows the $2q\times 2q$ determinant on the right hand side above equals 
\[|z_1x_1|\cdot \left|\begin{array}{cccc}  z_2x_2 & w_2x_{q+2} &   &  \\    & z_{3}x_3 & \ddots &  \\    &  & \ddots & w_{q-1}x_{2q-1} 
\\ -\frac{w_1w_q}{z_1}x_{2q}  &   &  & z_qx_q\end{array}\right| 
=|x_1|\cdot \left|\begin{array}{cccc}  z_2x_2 & w_2x_{q+2} &   &  \\    & z_{3}x_3 & \ddots &  \\    &  & \ddots & w_{q-1}x_{2q-1} 
\\ -w_1w_qx_{2q}  &   &  & z_1z_qx_q\end{array}\right| .
\] 
This implies that substituting $x_{q+1}$ and $x_1$ by the same matrix 
$y\in \mathcal{O}(M)^{n\times n}$ we get 
\[\xi_q(y,x_2,\dots,x_q,y,x_{q+2},\dots,x_{2q})=-\det(y)\cdot \xi_{q-1}(x_2,\dots,x_q,x_{q+2},\dots,x_{2q})\]
for $q>2$ and 
 \[\xi_4(y,x_2,y,x_4)=-\det(y)\cdot \langle x_2|x_4\rangle.\]
 In particular, setting $y=x_1+x_{q+1}$ we conclude that 
 \begin{equation}\label{eq:yy}
 \xi_q(x_1+x_{q+1},x_2,x_3,\dots,x_q,x_1+x_{q+1},x_{q+2},x_{q+3},\dots,x_{2q})\equiv 0.
 \end{equation}  
Now note that  $R$ is an $\mathbb{N}_0^m$-graded 
subalgebra of $\F[x_{ijk}\mid 1\le i,j\le n;\ k=1,\dots,m]$, where the $\mathbb{N}_0^m$-grading is given by setting the degree of the entries of $x_k$ to be the $k^{\mathrm{th}}$ standard basis vector $(0,\dots,0,1,0,\dots,0)$ of $\mathbb{N}_0^m$. \
Therefore any multihomogeneous component of the left hand side of \eqref{eq:yy} must belong to $(R_+)^2$. Observe also that $\xi_q$ is additive in each of its $2q$ 
matrix arguments. So taking the multihomogeneus component of multidegree 
$(\underbrace{1,\dots,1}_{2q},0,\dots,0)$ of \eqref{eq:yy} we get  
\[\xi_q(x_1,x_2,\dots,x_q,x_{q+1},x_{q+2},\dots,x_{2q})\equiv
-\xi_q(x_{q+1},x_2,x_3,\dots,x_q,x_1,x_{q+2},x_{q+3},\dots,x_{2q}).\] 
Similar argument shows that $\xi_q(x_1,\dots,x_{2q})$ changes sign if we interchange 
any two matrix arguments that appear in the same row or in the same column of the 
block matrix in \eqref{eq:xi-def}. Since any permutation of the matrix arguments can be obtained by applying an appropriate sequence of such transpositions, we obtain that 
(i) holds. 

In view of statement (i), it is sufficient to prove (ii) in the special case when 
$k_1=k_{q+1}$. The multihomogeneous component of \eqref{eq:yy} of multidegree 
$(2,\underbrace{1,\dots,1}_{q-1},0,\underbrace{1,\dots,1}_{q-1},0,\dots,0)$ gives 
\[\xi_q(x_1,x_2,\dots,x_q,x_1,x_{q+2},x_{q+3},\dots,x_{2q})\equiv 0,\] 
from which the desired statement follows by making the substitution 
$x_d\mapsto x_{k_d}$, $d=1,\dots,2q$. 
\end{proof}

\begin{proofof}{Theorem~\ref{thm:main}} Combining Theorem~\ref{thm:semi-invariants}, Lemma~\ref{lemma:1}, and Lemma~\ref{lemma:2} we get that the homogeneous semi-invariants in the statement of Theorem~\ref{thm:main} indeed generate the $\F$-algebra $R$. To prove minimality of this generating system, note that 
each of the given generators are multihomogeneous with respect to the $\mathbb{N}_0^m$-grading of $R$ mentioned in the proof of Lemma~\ref{lemma:2}. Therefore it is sufficient to show that each generator is indecomposable. 
This is obvious for $\det(x_k)$ and $\langle x_l\vert x_r\rangle$ since they have minimal possible total degree. Indecomposability of $\xi(x_{k_1},\dots,x_{k_{2q}})$ for 
$1\le k_1<\cdots<k_{2q}\le m$ and $\mathrm{char}(\F)=2$ was proved by 
Domokos and Frenkel \cite[Proposition 2.5]{domokos-frenkel:1}. 
\end{proofof}

We finish this section by pointing out that Theorem~\ref{thm:semi-invariants}, Lemma~\ref{lemma:1} and Lemma~\ref{lemma:2} allow an alternative deduction of the zero characteristic case of the following known minimal generating system of $\mathcal{O}((\mathbb{F}^{2\times 2})^m)^{SL_2\times SL_2}$:  

\begin{theorem}\label{thm:odd-char} 
For an infinite field $\mathbb{F}$ of characteristic different from $2$, 
\begin{align}\label{eq:odd-char} 
\{\det(x_k),\ \langle x_l\vert x_r\rangle ,\ \xi(x_{k_1},x_{k_2},x_{k_3},x_{k_4}) 
\mid 
k=1,\dots,m; \ 1\le l<r\le m;\ 
\\  \notag  1\le k_1<\cdots <k_4\le m\}.
\end{align}
is a minimal homogeneous generating system of 
$\mathcal{O}((\mathbb{F}^{2\times 2})^m)^{SL_2\times SL_2}$.  
\end{theorem}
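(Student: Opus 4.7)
The plan is to follow the strategy of the proof of Theorem~\ref{thm:main}. Applying Theorem~\ref{thm:semi-invariants}, Lemma~\ref{lemma:1}, and Lemma~\ref{lemma:2} exactly as before, one reduces to showing that $R$ is generated by $\det(x_k)$, $\langle x_l\vert x_r\rangle$, and the $\xi_q(x_{k_1},\dots,x_{k_{2q}})$ with $q\ge 2$ and $1\le k_1<\dots<k_{2q}\le m$. The additional ingredient needed here is to show that, in characteristic zero, the invariants $\xi_q$ with $q\ge 3$ are already decomposable, so only the $q=2$ terms survive. (For positive characteristic $\neq 2$, decomposability of $\xi_q$ for $q\ge 3$ is imported from the De Concini--Procesi description of $SO(4)$ vector invariants cited in the introduction.)

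For this key step, I would use an antisymmetrization argument. Set
\[
\mathrm{Alt}(\xi_q):=\sum_{\sigma\in S_{2q}}\mathrm{sign}(\sigma)\,\xi_q(x_{\sigma(1)},\dots,x_{\sigma(2q)})\in R.
\]
By Lemma~\ref{lemma:2}(i), $\xi_q(x_{\sigma(1)},\dots,x_{\sigma(2q)})\equiv\mathrm{sign}(\sigma)\,\xi_q(x_1,\dots,x_{2q})$ modulo $(R_+)^2$ for every $\sigma\in S_{2q}$, whence $\mathrm{Alt}(\xi_q)\equiv(2q)!\,\xi_q$. On the other hand $\xi_q(x_1,\dots,x_{2q})$ is visibly multilinear in its $2q$ matrix arguments (each block in \eqref{eq:xi-def} contributes exactly one entry to every surviving term of the determinant), so $\mathrm{Alt}(\xi_q)$ is a \emph{genuinely} antisymmetric multilinear function on $(\F^{2\times 2})^{2q}$. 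Since $\dim_\F\F^{2\times 2}=4<2q$ whenever $q\ge 3$, such a function must vanish identically, forcing $\mathrm{Alt}(\xi_q)=0$. Combining the two observations yields $(2q)!\,\xi_q\in(R_+)^2$, which in characteristic zero gives $\xi_q\in(R_+)^2$, as desired.

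Minimality of \eqref{eq:odd-char} then follows from the $\mathbb{N}_0^m$-multigrading argument used in the proof of Theorem~\ref{thm:main}: each listed generator is multihomogeneous and the multidegrees are pairwise distinct, so it suffices to verify indecomposability of each generator individually. This is automatic for $\det(x_k)$ and $\langle x_l\vert x_r\rangle$ (they have the minimal positive total degree $2$); for $\xi(x_{k_1},x_{k_2},x_{k_3},x_{k_4})$, one can, for instance, specialise $x_{k_1}$ to the identity matrix and observe that the resulting expression involves a $\mathrm{tr}(x_{k_2}x_{k_4}x_{k_3})$ term which is not reproducible from the three competing products of the form $\langle x_{k_i}\vert x_{k_j}\rangle\langle x_{k_r}\vert x_{k_s}\rangle$. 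The main obstacle is the antisymmetrization step: it critically combines the sign symmetry modulo $(R_+)^2$ from Lemma~\ref{lemma:2}(i) with the dimension count $2q>\dim_\F\F^{2\times 2}$, and it is the required invertibility of $(2q)!$ that prevents this argument from extending to characteristic $2$, in agreement with the higher-degree indecomposable generators appearing in Theorem~\ref{thm:main}.
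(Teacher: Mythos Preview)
Your proposal is correct and follows essentially the same route as the paper. Your antisymmetrization step is exactly the elementary unpacking of the paper's assertion that $e\cdot T=\{0\}$ for the sign idempotent $e=\frac{1}{(2q)!}\sum_{\pi}\mathrm{sign}(\pi)\pi$: applying $e$ to $\xi_q$ computes $\frac{1}{(2q)!}\mathrm{Alt}(\xi_q)$, and vanishing of $\mathrm{Alt}(\xi_q)$ is precisely the dimension count $2q>4$ for antisymmetric multilinear maps on $\mathbb{F}^{2\times 2}$. The only minor divergence is in verifying indecomposability of $\xi(x_{k_1},x_{k_2},x_{k_3},x_{k_4})$: the paper uses the transpose test (the quadratic generators satisfy $f(A_1,\dots)=f(A_1^T,\dots)$ but $\xi$ does not), whereas you specialize one argument to the identity and appeal to the indecomposability of $\tr(xyz)$ as a conjugation invariant---both arguments are valid and short.
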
  

\begin{proof} As explained in the introduction, this is known by \cite{deconcini-procesi}, \cite{domokos}, \cite{domokos-drensky}. We give a new argument for the case 
$\mathrm{char}(\mathbb{F})=0$. 

Combining Theorem~\ref{thm:semi-invariants}, Lemma~\ref{lemma:1}, and Lemma~\ref{lemma:2} we get that the semi-invariants \eqref{eq:2x2-generators} generate the $\F$-algebra $R$. Therefore to prove that \eqref{eq:odd-char} is already a generating system of $R$, it is sufficient to show that $\xi(x_1,\dots,x_{2q})$ is decomposable in $R$ for each $q>2$. 

Suppose to the contrary that $\xi(x_1,\dots,x_{2q})\notin (R_+)^2$. 
We may assume that $m=2q$. Denote by $T$ the subspace of $R$ spanned by the elements having multidegree 
$(\underbrace{1,\dots,1}_{2q})$. The symmetric group $\Sigma_{2q}=\mathrm{Sym}\{1,\dots,2q\}$ acts linearly on $R$ via $\mathbb{F}$-algebra automorphisms, namely a permutation $\pi$ 
sends $x_{ijk}$ to $x_{ij\pi^{-1}(k)}$. Obviously both $T$ and $R_+^2$ are $\Sigma_{2q}$-invariant subspaces in $R$. By Lemma~\ref{lemma:2} (i) we have that the image of 
$\xi(x_1,\dots,x_{2q})$ in $T/((R_+)^2\cap T)$ spans a one-dimensional $\Sigma_{2q}$-invariant subspace on which $\Sigma_{2q}$ acts via its sign representation. 
In particular, the $\Sigma_{2q}$-module $T$ has the sign representation as a direct summand. This is a contradiction. Indeed, 
$e:=\frac{1}{(2q)!}\sum_{\pi\in\Sigma_{2q}}\mathrm{sign}(\pi)\pi$ is the primitive central idempotent in the group algebra $\mathbb{F}\Sigma_{2q}$ corresponding to the sign representation, 
and $e\cdot T=\{0\}$ when $2q>4$.  

To see minimality of this  homogeneous generating system, note that the given generators are multihomogeneous with distinct multidegrees, so it is sufficient to show that each of them is indecomposable. The quadratic generators have minimal possible degree in $R_+$, therefore they are indecomposable. The degree four generators are not contained 
in the subalgebra of $R$ generated by the quadratic generators. 
Indeed, suppose to the contrary that $\xi(x_1,x_2,x_3,x_4)$ is contained in the subalgebra 
generated by the quadratic generators. Since the quadratic generators take the same value 
on $(A_1,A_2,A_3,A_4)$ and the transposed $4$-tuple $(A_1^T,A_2^T,A_3^T,A_4^T)$, 
the same holds then for $\xi(x_1,x_2,x_3,x_4)$.  However, this is not the case, as one can easily check. 
\end{proof} 

\section{$\mathbb{Z}$-algebra generators} \label{sec:Z-form}

The algebra $\mathcal{O}((\mathbb{Q}^{n\times n})^m)$ contains the subring 
$\mathcal{O}_{\Z}:=\Z[x_{ijk}\mid 1\le i,j\le n,\ k=1,\dots,m]$, and 
$R_{\mathbb{Q}}:=\mathcal{O}((\mathbb{Q}^{n\times n})^m)^{SL_n\times SL_n}$ contains the subring 
\[R_{\Z}:=\mathcal{O}_{\Z}\cap R_{\mathbb{Q}}.\] 
For an infinite field $\F$ identify 
$\mathcal{O}_{\F}:=\mathcal{O}((\F^{n\times n})^m)$ with $\F\otimes \mathcal{O}_{\Z}$  in the obvious way (here and later the symbol $\otimes$ stands for tensor product of $\Z$-modules), and denote by 
\[\iota_{\F}:\mathcal{O}_{\Z}\to \mathcal{O}_{\F}\text{ the map }h\mapsto 1\otimes h.\] 
Since $R_{\Z}$ can be interpreted as a ring of invariants of the  affine group scheme 
$SL_n\times SL_n$ over $\Z$, we have 
\[\iota_{\F}(R_{\Z})\subseteq R_{\F}\] 
(see the book of Jantzen \cite[I.2.10]{jantzen} for the relevant material on invariants of group schemes).  Even more, we have the following: 

\begin{proposition}\label{prop:integral-invariants} 
Let $\F$ be an infinite field.
\begin{itemize}  
\item[(i)] The algebra $R_{\F}$ is spanned as an $\F$-vector space by $\iota_{\F}(R_{\Z})$. 
\item[(ii)] Any system of generators of the ring $R_{\Z}$ is mapped to an $\F$-algebra 
generating system of $R_{\F}$. 
\end{itemize} 
\end{proposition}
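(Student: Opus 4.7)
The plan is to reduce (ii) to (i) by a formal argument, and to prove (i) directly from Theorem~\ref{thm:semi-invariants} by observing that the spanning set it produces is already defined over $\Z$.

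For (i), I would note first that the formula $\coef(t^\alpha,\det(x\otimes t))$ involves only integer operations, so it defines an element $f_\alpha \in \mathcal{O}_\Z$, and the analogous element computed over $\F$ is exactly $\iota_\F(f_\alpha)$. The proof of Lemma~\ref{lemma:semiinvariant} uses only the multiplicativity of the determinant, so it applies verbatim over $\mathbb{Q}$ and shows that $f_\alpha \in R_\mathbb{Q}$; combined with $f_\alpha \in \mathcal{O}_\Z$ this gives $f_\alpha \in R_\Z$. On the other hand, Theorem~\ref{thm:semi-invariants} applied to the infinite field $\F$ asserts that the elements $\iota_\F(f_\alpha)$, with $\alpha$ ranging over the indexing set \eqref{eq:dwdzsv}, span $R_\F$ as an $\F$-vector space. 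Since all of these lie in $\iota_\F(R_\Z)$, the $\F$-linear span of $\iota_\F(R_\Z)$ equals $R_\F$, which is exactly assertion (i).

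Part (ii) is then a formal consequence. If $\mathcal{G}\subseteq R_\Z$ generates $R_\Z$ as a ring, then every element of $R_\Z$ is a polynomial with integer coefficients in the elements of $\mathcal{G}$, so $\iota_\F(R_\Z)$ is contained in the $\F$-subalgebra $A \subseteq R_\F$ generated by $\iota_\F(\mathcal{G})$. By (i), $R_\F$ is the $\F$-linear span of $\iota_\F(R_\Z)$, hence $R_\F \subseteq A$, and the reverse inclusion is automatic from $\iota_\F(R_\Z) \subseteq R_\F$.

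The main, rather mild, obstacle is to identify the correct integral model of the invariants. Once one recognises that the Derksen--Weyman/Domokos--Zubkov formulas indexed by \eqref{eq:dwdzsv} have integer coefficients, no appeal to good filtrations, Schur functors, or cohomology of reductive group schemes is required: Theorem~\ref{thm:semi-invariants} already carries the necessary characteristic-free content, and Proposition~\ref{prop:integral-invariants} reduces to bookkeeping of the base-change map $\iota_\F$.
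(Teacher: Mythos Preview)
Your proof is correct and follows essentially the same approach as the paper: both observe that the elements \eqref{eq:dwdzsv} defined over $\mathbb{Q}$ already lie in $R_{\Z}$, invoke Theorem~\ref{thm:semi-invariants} to conclude their images span $R_{\F}$, and then derive (ii) formally from (i). Your write-up merely spells out in more detail the reasons (integrality of the determinant expansion, Lemma~\ref{lemma:semiinvariant} over $\mathbb{Q}$) that the paper leaves implicit.
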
  

\begin{proof} 
For $\F=\mathbb{Q}$ the elements in \eqref{eq:dwdzsv} belong to $R_{\Z}$, and Theorem~\ref{thm:semi-invariants} asserts that for an arbitrary infinite field $\F$ their images span $R_{\F}$ as an $\F$-vector space. So (i) holds, and it implies (ii).  
\end{proof} 

The polynomial rings $\mathcal{O}_{\Z}$, $\mathcal{O}_{\F}$ are graded in the standard way (the variables $x_{ijk}$ have degree $1$), and the subrings $R_{\Z}$, $R_{\F}$ are generated by homogeneous elements. Below for a graded ring $A$ we whall write $A^{(d)}$ for the degree $d$ homogeneous component of $A$. 

\begin{lemma} \label{lemma:SR} 
Suppose that $S$ is a graded $\Z$-submodule of $R_{\Z}$ such that $\iota_{\F}(S)$ spans 
$R_{\F}$ for any infinite field $\F$. Then $S=R_{\Z}$. 
\end{lemma}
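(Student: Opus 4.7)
The plan is to reduce the lemma, by passing to each graded piece, to the purely module-theoretic claim that a finitely generated abelian group $Q$ satisfying $\F\otimes_\Z Q=0$ for every infinite field $\F$ must vanish. The bridge between the two statements is an identification $R_\F\cong \F\otimes_\Z R_\Z$, which is what allows the hypothesis on $S$ to be rephrased as the vanishing of certain tensor products.

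First I would note that both $R_\Z$ and $S$ are graded, so it suffices to prove $S^{(d)}=R_\Z^{(d)}$ for every $d$. The $\Z$-submodule $R_\Z^{(d)}\subseteq \mathcal{O}_\Z^{(d)}$ is pure: if $nf\in R_\Z^{(d)}$ for some nonzero integer $n$ and some $f\in \mathcal{O}_\Z^{(d)}$, then $f=(1/n)(nf)\in R_{\mathbb{Q}}\cap\mathcal{O}_\Z=R_\Z$. Hence $R_\Z^{(d)}$ is a direct summand of the free $\Z$-module $\mathcal{O}_\Z^{(d)}$, and in particular is itself free of finite rank.

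Using purity, I would upgrade Proposition~\ref{prop:integral-invariants}(i) to the statement that the canonical map $\phi_\F\colon \F\otimes_\Z R_\Z\to R_\F$, $\lambda\otimes h\mapsto\lambda\iota_\F(h)$, is an $\F$-linear isomorphism: surjectivity is Proposition~\ref{prop:integral-invariants}(i), while injectivity follows from the fact that $\phi_\F$ factors through the inclusion $\F\otimes_\Z R_\Z\hookrightarrow \F\otimes_\Z\mathcal{O}_\Z=\mathcal{O}_\F$ provided by the direct summand property. Setting $Q:=R_\Z/S$, a graded $\Z$-module with finitely generated components $Q^{(d)}=R_\Z^{(d)}/S^{(d)}$, the hypothesis that $\iota_\F(S)$ spans $R_\F$ becomes $\F\otimes_\Z Q^{(d)}=0$ for every $d$ and every infinite field $\F$.

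Finally I would show that this forces each $Q^{(d)}$ to vanish. Taking $\F=\mathbb{Q}$ gives that $Q^{(d)}$ is torsion, hence finite. If $Q^{(d)}\neq 0$, pick a prime $p$ dividing its order; the structure theorem for finitely generated abelian groups yields $(\Z/p\Z)\otimes_\Z Q^{(d)}\neq 0$, and therefore $\overline{\mathbb{F}_p}\otimes_\Z Q^{(d)}\neq 0$, contradicting the hypothesis applied to the infinite field $\overline{\mathbb{F}_p}$. I do not foresee any serious obstacle; the one conceptual point worth isolating is the purity argument that turns Proposition~\ref{prop:integral-invariants}(i) into an actual identification of $\F\otimes_\Z R_\Z$ with $R_\F$, after which the conclusion is the standard ``finitely generated abelian group that dies after tensoring with every residue field is zero'' observation.
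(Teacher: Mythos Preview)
Your proof is correct and follows essentially the same route as the paper's: both arguments establish purity of $R_{\Z}$ in $\mathcal{O}_{\Z}$ to get the identification $\F\otimes_{\Z}R_{\Z}\cong R_{\F}$, then tensor the short exact sequence $0\to S^{(d)}\to R_{\Z}^{(d)}\to R_{\Z}^{(d)}/S^{(d)}\to 0$ with $\F$ and kill the quotient by testing against $\F=\mathbb{Q}$ and a field of each prime characteristic. The only cosmetic difference is that you name $\overline{\mathbb{F}_p}$ explicitly where the paper just invokes ``any field of characteristic $p$''.
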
 

\begin{proof} 
Note that the factor $\Z$-module $\mathcal{O}_{\Z}/R_{\Z}$ is torsion-free. Indeed, 
if $kh\in R_{\Z}$ for some positive integer $k$ and $h\in \mathcal{O}_{\Z}$, then 
$h\in \mathcal{O}_{\Z}\cap \mathbb{Q}R_{\Z}
=\mathcal{O}_{\Z}\cap R_{\mathbb{Q}}=R_{\Z}$. It follows that for any non-negative integer 
$d$, $\mathcal{O}_{\Z}^{(d)}/R_{\Z}^{(d)}$ is a finitely generated free $\Z$-module. 
Therefore $R_{\Z}^{(d)}$ is a $\Z$-module direct summand in $\mathcal{O}_{\Z}^{(d)}$, hence the embedding $R_{\Z}\hookrightarrow \mathcal{O}_{\Z}$ induces an isomorphism 
\begin{equation}\label{eq:iso}\F\otimes R_{\Z}\stackrel{\cong} \longrightarrow \F\iota_{\F}(R_{\Z})\subseteq \F\otimes \mathcal{O}_{\Z}=\mathcal{O}_{\F}.
\end{equation} 
Now tensor with $\F$ the short exact sequence 
\[0\to S^{(d)}\to R_{\Z}^{(d)}\to R_{\Z}^{(d)}/S^{(d)}\to 0\] 
to get the exact sequence 
\begin{equation}\label{eq:exact} 
\F\otimes S^{(d)}\stackrel{\varphi}  \longrightarrow \F\otimes R_{'\Z}^{(d)} 
\to \F\otimes R_{\Z}^{(d)}/S^{(d)}\to 0.
\end{equation}
The image of $\varphi$ above is $\F\iota_{\F}(S^{(d)})=R_{\F}^{(d)}$ by assumption, which by 
Proposition~\ref{prop:integral-invariants} (i) and by \eqref{eq:iso} is 
$R_{\F}^{(d)}=\F\iota_{\F}(R_{\Z}^{(d)})=\F\otimes R_{\Z}^{(d)}$. 
So $\varphi$ is surjective, and thus the exactness of \eqref{eq:exact} means that $\F\otimes R_{\Z}^{(d)}/S^{(d)}=0$.  
This holds for any infinite field $\F$. 

Now $R_{\Z}^{(d)}/S^{(d)}$ is a finitely generated $\Z$-module. The equality 
$\mathbb{Q}\otimes R_{\Z}^{(d)}/S^{(d)}=0$ implies that $R_{\Z}^{(d)}/S^{(d)}$ is a torsion 
$\Z$-module, hence it is a finite abelian group. We claim that it is zero. Assume on the contrary that it is non-zero, and let $p$ be a prime divisor of its order. Then for any field $\F$ whose characteristic is $p$, we have 
$\F\otimes R_{\Z}^{(d)}/S^{(d)}\neq 0$, a contradiction. 
So we have $R_{\Z}^{(d)}/S^{(d)}=0$ holds for all $d\in \mathbb{N}_0$. This means that 
$S=R_{\Z}$. 
\end{proof} 

\begin{remark} A similar argument was used by Donkin \cite[Page 399]{donkin} 
to get generators in the $\Z$-form of the algebra of simultaneous conjugation invariants of $m$-tuples of $n\times n$ matrices. 
\end{remark} 

As a special case of a general result of Seshadri \cite{seshadri} one gets that  
the ring $R_{\Z}$ is finitely generated. An explicit finite generating system is given below: 

\begin{theorem}\label{thm:Z-fingen}   
The ring $R_{\Z}$ is generated by the elements in \eqref{eq:dwdzsv}  with $q=1,2,\dots,mn^3$, where $\F=\mathbb{Q}$. 
\end{theorem}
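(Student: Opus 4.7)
The plan is to invoke Lemma~\ref{lemma:SR} with $S$ equal to the $\Z$-subalgebra of $R_{\Z}$ generated by the elements listed in \eqref{eq:dwdzsv} for $q = 1, \dots, mn^3$. These generators do lie in $R_{\Z}$ because the polynomials $\coef(t^\alpha,\det(x\otimes t))$ have integer coefficients by their definition; moreover $S$ is automatically a graded $\Z$-submodule since all generators are homogeneous and products of homogeneous elements are homogeneous. Lemma~\ref{lemma:SR} then reduces the theorem to the assertion that for every infinite field $\F$ the $\F$-vector space $\F\iota_{\F}(S)$ coincides with $R_{\F}$.

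The key external input is the explicit degree bound of Derksen and Makam (\cite{derksen-makam:1}, \cite{derksen-makam:2}): for every infinite field $\F$, the algebra $R_{\F}$ is generated as an $\F$-algebra by its homogeneous elements of total degree at most $mn^4$. Because $\F\iota_{\F}(S)$ is closed under multiplication (it is the $\F$-subalgebra of $R_{\F}$ generated by the $\iota_{\F}$-images of the given generators), once we know $R_{\F}^{(d)}\subseteq \F\iota_{\F}(S)$ for every $d\le mn^4$, the Derksen--Makam bound upgrades this to $R_{\F}=\F\iota_{\F}(S)$.

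To check the inclusion in the relevant range, note that by Theorem~\ref{thm:semi-invariants} nonzero homogeneous components of $R_{\F}$ occur only in degrees of the form $d=nq$, and the constraint $d\le mn^4$ is precisely $q\le mn^3$. For such $q$, Theorem~\ref{thm:semi-invariants} furnishes an $\F$-vector space spanning set of $R_{\F}^{(nq)}$ consisting exactly of the elements $\coef(t^\alpha,\det(x\otimes t))$ with $\sum_{r,s,k}\alpha_{rsk}=nq$. But over $\F$ these elements are literally the images under $\iota_{\F}$ of the corresponding elements defined over $\mathbb{Q}$, which by our choice of $S$ are generators of $S$. Hence $R_{\F}^{(nq)}\subseteq \iota_{\F}(S)\subseteq \F\iota_{\F}(S)$ for every $q\le mn^3$, and Lemma~\ref{lemma:SR} yields $S=R_{\Z}$.

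The one delicate point is sourcing the Derksen--Makam bound in the precise form needed: one must confirm that the constant $mn^4$ applies uniformly to $R_{\F}$ for arbitrary infinite $\F$ (including positive characteristic), which is exactly the content of \cite{derksen-makam:1,derksen-makam:2}. Granted this input, the remainder of the argument is essentially a matching of Theorem~\ref{thm:semi-invariants} with the Derksen--Makam truncation, passed through Lemma~\ref{lemma:SR}.
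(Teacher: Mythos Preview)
Your proof is correct and follows essentially the same approach as the paper's own proof: define $S$ as the subring generated by the listed elements, use Theorem~\ref{thm:semi-invariants} together with the Derksen--Makam degree bound to show that $\F\iota_{\F}(S)=R_{\F}$ for every infinite field $\F$, and then invoke Lemma~\ref{lemma:SR}. Your write-up is in fact slightly more detailed than the paper's (you spell out why the generators lie in $R_{\Z}$, why $S$ is graded, and how the degree constraint $d\le mn^4$ translates to $q\le mn^3$), but the logical structure is identical.
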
  

\begin{proof} 
Denote by $S$ the subring of $R_{\Z}$ generated by the elements in \eqref{eq:dwdzsv}  with $q=1,2,\dots,mn^3$, where $\F=\mathbb{Q}$. For any infinite field $\F$, 
$\F \iota_{\F}(S)$ is a subalgebra of $R_{\F}$. By Theorem~\ref{thm:semi-invariants} it 
contains the homogeneous components of $R_{\F}$ of degree at most $mn^4$. 
Derksen and Makam \cite{derksen-makam:2} proved that these components generate the $\F$-algebra $R_{\F}$. Thus we have $\F\iota_{\F}(S)=R_{\F}$  for any infinite field $\F$. 
Consequently, by Lemma~\ref{lemma:SR} we conclude $S=R_{\Z}$. 
\end{proof}

\begin{remark}
Similar arguments together with the results of \cite{derksen-makam:2}, \cite{derksen-weyman}, \cite{domokos-zubkov}  yield finite generating systems of the $\Z$-forms of rings of semi-invariants on spaces of quiver representations. 
We omit the details. 
\end{remark} 

Next we return to the special case $n=2$: 

\medskip
\begin{proofof}{Theorem~\ref{thm:2x2-integral}} 
Denote by $S$ the subring of $\Z[x_{ijk}\mid 1\le i,j\le 2,\ k=1,\dots,m]$ generated by the elements \eqref{eq:2x2-generators}.  The conditions of Lemma~\ref{lemma:SR} hold for 
$S$ by Theorem~\ref{thm:semi-invariants}, Lemma~\ref{lemma:1}  and Lemma~\ref{lemma:2},  hence by Lemma~\ref{lemma:SR} we get that $S=R_{\Z}$. 

Minimality of this generating system follows from the minimality statement in Theorem~\ref{thm:main} (which follows from \cite{domokos-frenkel:1}) and Proposition~\ref{prop:integral-invariants} (ii). 
\end{proofof} 

\section{Minimal system of separating invariants} 

In this section we assume that the base field $\F$ is algebraically closed. 
Following Derksen and Kemper \cite{derksen-kemper:book},  
a subset $S$ of $R=\mathcal{O}(M)^{SL_n\times SL_n}$ is called \emph{separating} if for any $A,B\in M$, if there exists an 
$f\in R$ with $f(A)\neq f(B)$, then there exists an $h\in S$ with $h(A)\neq h(B)$. 

\begin{theorem} \label{thm:separating} Let $\mathbb{F}$ be an algebraically closed field of arbitrary characteristic. 
\begin{itemize} 
\item[(i)] The following is a separating system of 
$\mathcal{O}((\F^{2\times 2})^m)^{SL_2\times SL_2}$: 
\begin{align}\notag 
\mathcal{S}_m:=\{\det(x_k),\ \langle x_l\vert x_r\rangle ,\ \xi(x_{k_1},x_{k_2},x_{k_3},x_{k_4}) 
\mid 
k=1,\dots,m; \ 1\le l<r\le m;\ 
\\  \notag  1\le k_1<\cdots <k_4\le m\}.
\end{align}
\item[(ii)] The above separating system is irredundant. 
\end{itemize}
\end{theorem}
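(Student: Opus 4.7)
The argument naturally splits by characteristic. When $\mathrm{char}(\F)\neq 2$, Part (i) is immediate: Theorem~\ref{thm:odd-char} says $\mathcal{S}_m$ already generates $R_\F$ as an $\F$-algebra, and any generating set is \emph{a fortiori} separating. So the content lies in the case $\mathrm{char}(\F)=2$, where by Theorem~\ref{thm:main} one has additional generators $\xi(x_{k_1},\dots,x_{k_{2q}})$ for $q\ge 3$; the plan is to show that each such higher $\xi$-value on a configuration $A\in M$ is already determined by the values $\{h(A):h\in\mathcal{S}_m\}$. The cleanest route is to prove $\xi(x_{k_1},\dots,x_{k_{2q}})^2\in\F[\mathcal{S}_m]$, the $\F$-subalgebra generated by $\mathcal{S}_m$. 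Since $\F$ is algebraically closed of characteristic $2$, hence perfect, squaring is injective on $\F$, so $\mathcal{S}_m(A)=\mathcal{S}_m(B)$ forces $\xi(A)^2=\xi(B)^2$ and hence $\xi(A)=\xi(B)$. The required square-in-subring identity is essentially the characteristic-$2$ analogue of ``square of a Pfaffian equals a determinant'' and fits the framework of Section~\ref{sec:orthogonal}, where $R_\F$ is identified with the vector invariants of $SO(4,\F)$ on $(\F^4)^m$; the exotic characteristic-$2$ invariants introduced by Domokos and Frenkel~\cite{domokos-frenkel:2} are known to satisfy such quadratic relations over the standard orthogonal quadratic invariants.

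Part (ii) is handled by exhibiting, for each $f\in\mathcal{S}_m$, an explicit pair $A,B\in M$ on which $f$ differs but every other element of $\mathcal{S}_m$ agrees. For $f=\det(x_k)$, take $A$ with $A_k$ the identity and $A_j=0$ for $j\ne k$, and $B=0$; since $\langle x_l|x_r\rangle$ and $\xi(x_{k_1},\dots,x_{k_4})$ are multilinear in their matrix slots they vanish as soon as any involved position carries $0$, so they agree on $A$ and $B$ (at $0$), while $\det(x_k)(A)=1\ne 0=\det(x_k)(B)$. For $f=\langle x_l|x_r\rangle$, take $A$ with $A_l=\mathrm{diag}(1,0)$, $A_r=\mathrm{diag}(0,1)$, all other entries zero, and $B=0$: one verifies $\langle A_l|A_r\rangle=1$ while every other listed invariant vanishes. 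For $f=\xi(x_{k_1},\dots,x_{k_4})$, in characteristic $\neq 2$ one swaps two matrix arguments in an orthogonal-basis-like configuration to flip the sign of $\xi$ while preserving all degree-$2$ invariants; in characteristic $2$ one constructs a specific pair $A,B$ by direct calculation, exploiting the structure of $(M_2(\F),\det)$ as a characteristic-$2$ quadratic $4$-space to find two non-$SL_2\times SL_2$-equivalent configurations with matching degree-$2$ invariants but distinct $\xi$-values.

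The main obstacle is Part (i) in characteristic $2$: establishing $\xi_q^2\in\F[\mathcal{S}_m]$. A naive lift of the characteristic-zero expression $\xi_q\in\mathbb{Q}[\mathcal{S}_m]$ by clearing a power of $2$ and then squaring only yields $4^N\xi_q^2\in\Z[\mathcal{S}_m]$, which reduces modulo $2$ to a trivial statement; one must work intrinsically in characteristic $2$, either by direct combinatorial manipulation of the block determinant in~\eqref{eq:xi-def} combined with a Pfaffian--determinant identity, or by importing the square-in-subring relations from the $SO(4,\F)$-invariant theory of Section~\ref{sec:orthogonal}. Once this identity is in hand the rest of (i) is formal, and the case analysis in (ii)---the only remaining subtlety being the $\xi$-case in characteristic $2$---is elementary.
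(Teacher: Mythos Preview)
Your approach to Part~(i) diverges substantially from the paper's and carries a genuine gap. The paper does \emph{not} split by characteristic, nor does it try to show that the higher $\xi_q$ are controlled by $\mathcal{S}_m$ via squaring. Instead it observes that for $m=4$ the set $\mathcal{S}_4$ is already a full generating system of $R_{\F}$ (by Theorem~\ref{thm:main} in characteristic~$2$ and Theorem~\ref{thm:odd-char} otherwise), hence separating; then, since $\dim_{\F}(\F^{2\times 2})=4$, the polarization theorem of Draisma--Kemper--Wehlau~\cite{draisma-kemper-wehlau} (or Grosshans~\cite{grosshans}) gives that polarizations of $\mathcal{S}_4$ separate on $(\F^{2\times 2})^m$ for all $m$, and these polarizations lie in the subalgebra generated by $\mathcal{S}_m$. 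This is short and characteristic-free. By contrast, your key step---that $\xi_q(x_{k_1},\dots,x_{k_{2q}})^2\in\F[\mathcal{S}_m]$ in characteristic~$2$---is neither proved in your proposal nor supplied by the references you invoke. You yourself flag it as ``the main obstacle'' and offer only the options ``direct combinatorial manipulation'' or ``importing relations from $SO(4,\F)$-invariant theory'', without carrying either out. The claim may well be true (a Pfaffian-squared-equals-Gram-determinant identity is plausible here), but as written this is an unproven assertion on which the whole characteristic-$2$ argument rests.

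For Part~(ii) the paper again proceeds differently, and your sketch is incomplete in the same place. The paper reduces irredundancy of $\xi(x_{k_1},\dots,x_{k_4})$ to conjugation invariants: specializing $x_4\mapsto I$ via $\sigma^*$ sends $\mathcal{S}_4\setminus\{\xi\}$ into the subalgebra generated by the degree~$\le 2$ elements of the Kaygorodov--Lopatin--Popov system (Lemma~\ref{lemma:kaygorodov-etal}), which is known not to separate; Corollary~\ref{cor:sep-to-sep} then gives the contradiction. Your direct-pair approach works cleanly for $\det(x_k)$ and $\langle x_l|x_r\rangle$, but for $\xi$ your characteristic~$\neq 2$ ``swap two arguments'' idea needs additional constraints you do not state (swapping $A_i\leftrightarrow A_j$ also permutes the values $\langle A_i|A_k\rangle\leftrightarrow\langle A_j|A_k\rangle$ for $k\neq i,j$, so you must arrange these to coincide), and in characteristic~$2$ you merely assert that a suitable pair exists ``by direct calculation'' without exhibiting one. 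Both gaps are fillable, but neither is filled.
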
 

\begin{proofof}{Theorem~\ref{thm:separating} (i)}  
In the special case 
$m=4$, $\mathcal{S}_4$ is even a generating system by Theorem~\ref{thm:main} 
and Theorem~\ref{thm:odd-char}. 
Since $\dim_{\F}(\F^{2\times 2})=4$,  
a general result of Draisma, Kemper and Wehlau 
\cite{draisma-kemper-wehlau} or Grosshans \cite[Theorem 7]{grosshans} implies that the polarizations of a separating system in $\mathcal{O}((\F^{2\times 2})^4)$ constitute a separating system 
in $\mathcal{O}((\F^{2\times 2})^m)$ for an arbitrary $m$. Now for $m\ge 4$ the  polarizations of the subset $\mathcal{S}_4$ are contained in the subalgebra generated by  
$\mathcal{S}_m$.  
\end{proofof} 

In order to prove Theorem~\ref{thm:separating} (ii) it is helpful to recall a connection between the ring of matrix semi-invariants and another ring of invariants. Namely the group $SL_n$ 
acts on $(\mathbb{F}^{n\times n})^m$ by simultaneous conjugation: for $g\in SL_n$ and 
$(A_1,\dots,A_m)$ we set 
\[g\cdot (A_1,\dots,A_m)=(gA_1g^{-1},\dots,gA_mg^{-1}).\] 
For $m\ge 2$ consider the embedding 
\[\sigma:\mathcal{O}((\mathbb{F}^{n\times n})^{m-1})\to 
\mathcal{O}((\mathbb{F}^{n\times n})^m),\quad (A_1,\dots,A_{m-1})\mapsto (A_1,\dots,A_{m-1},I)\] 
where $I$ stands for the $n\times n$ identity matrix. 

\begin{proposition}\label{prop:4.1}  \cite[Proposition 4.1]{domokos:3x3} 
The comorphism 
$\sigma^*:\mathcal{O}((\mathbb{F}^{n\times n})^m)\to \mathcal{O}((\mathbb{F}^{n\times n})^{m-1})$ of $\sigma$ maps $\mathcal{O}(M)^{SL_n\times SL_n}$ surjectively onto 
the algebra $\mathcal{O}((\mathbb{F}^{n\times n})^{m-1})^{SL_n}$ of simultaneous conjugation invariants of matrices.   
\end{proposition}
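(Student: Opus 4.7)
The statement has two parts: (a) $\sigma^*$ maps $\mathcal{O}(M)^{SL_n\times SL_n}$ into $\mathcal{O}((\mathbb{F}^{n\times n})^{m-1})^{SL_n}$, and (b) this restricted map is surjective. Part (a) is a stabilizer computation. The isotropy subgroup of $I\in \mathbb{F}^{n\times n}$ under the action $(g,h)\cdot A=gAh^{-1}$ is the diagonal $\{(g,g)\mid g\in SL_n\}\cong SL_n$, so an $SL_n\times SL_n$-invariant function, restricted to the image of $\sigma$, yields a function invariant under this diagonal, which is precisely the simultaneous-conjugation action on $(\mathbb{F}^{n\times n})^{m-1}$.

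For part (b), I would build an explicit polynomial section. Given $\phi\in\mathcal{O}((\mathbb{F}^{n\times n})^{m-1})^{SL_n}$, define
\[\tilde\phi(x_1,\dots,x_m)\;:=\;\phi\bigl(x_1\,\mathrm{adj}(x_m),\,x_2\,\mathrm{adj}(x_m),\,\dots,\,x_{m-1}\,\mathrm{adj}(x_m)\bigr),\]
where $\mathrm{adj}$ denotes the classical adjugate. Since each entry of $\mathrm{adj}(x_m)$ is a polynomial (of degree $n-1$) in the entries of $x_m$, the function $\tilde\phi$ is genuinely polynomial in the $x_{ijk}$. To check $SL_n\times SL_n$-invariance, apply the anti-multiplicativity $\mathrm{adj}(AB)=\mathrm{adj}(B)\mathrm{adj}(A)$ together with $\mathrm{adj}(g)=g^{-1}$ and $\mathrm{adj}(h^{-1})=h$ (both valid because $\det(g)=\det(h)=1$) to compute
\[(gx_kh^{-1})\cdot \mathrm{adj}(gx_mh^{-1})\;=\;gx_kh^{-1}\cdot h\,\mathrm{adj}(x_m)\,g^{-1}\;=\;g\bigl(x_k\,\mathrm{adj}(x_m)\bigr)g^{-1}.\]
Thus every argument of $\phi$ inside $\tilde\phi$ is simultaneously conjugated by $g$ alone (and the $h$-dependence cancels), so the $SL_n$-invariance of $\phi$ forces $\tilde\phi\in \mathcal{O}(M)^{SL_n\times SL_n}$. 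Because $\mathrm{adj}(I)=I$, substituting $x_m=I$ recovers $\sigma^*(\tilde\phi)=\phi$, which gives surjectivity.

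The main obstacle is that the naive candidate $\phi(x_1x_m^{-1},\dots,x_{m-1}x_m^{-1})$ is only rational on $M$; clearing denominators by a power of $\det(x_m)$ would destroy the property that its restriction to $x_m=I$ equals $\phi$. The key move is to replace $x_m^{-1}$ by $\mathrm{adj}(x_m)$: this preserves polynomiality, the two expressions agree on the dense subset where $x_m\in SL_n$ (which is exactly where the transformation calculation lives), and the substitution $x_m=I$ still yields $\phi$ since $\mathrm{adj}(I)=I$. No characteristic restriction is needed because all the adjugate identities used are valid over an arbitrary commutative ring.
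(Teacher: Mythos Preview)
Your argument is correct. The paper does not actually prove this proposition; it merely cites it from \cite{domokos:3x3}, so there is no ``paper's own proof'' to compare against line by line. What the paper does offer is a geometric paraphrase in the subsequent Proposition~\ref{prop:fibre-bundle}: the map $(g,h,A_1,\dots,A_{m-1})\mapsto (gA_1h^{-1},\dots,gA_{m-1}h^{-1},gh^{-1})$ realises $M_0=\{\det(A_m)=1\}$ as the associated fibre bundle $(SL_n\times SL_n)\times_{SL_n}(\mathbb{F}^{n\times n})^{m-1}$, from which the identification of invariant algebras (after localisation at $\det(x_m)-1$, and then using that $R$ is graded) follows by general nonsense on associated bundles.

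Your approach is more elementary and more explicit: instead of invoking the fibre-bundle machinery, you write down a concrete polynomial section $\phi\mapsto\tilde\phi$ of $\sigma^*$ using the adjugate. The verification that $\tilde\phi$ is $SL_n\times SL_n$-invariant via $\mathrm{adj}(gx_mh^{-1})=h\,\mathrm{adj}(x_m)\,g^{-1}$ is clean, and $\mathrm{adj}(I)=I$ gives $\sigma^*(\tilde\phi)=\phi$ immediately. One small remark: your final paragraph suggests that the invariance calculation ``lives'' only on the locus $x_m\in SL_n$, but in fact your adjugate identities are polynomial and hold everywhere, so no density argument is needed at all---the direct computation you wrote already establishes invariance on all of $M$. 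The advantage of your route is that it is self-contained and characteristic-free without appeal to any structure theory; the advantage of the fibre-bundle viewpoint is that it simultaneously yields the orbit-correspondence statements used later for Corollary~\ref{cor:sep-to-sep}.
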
 

This statement has the following immediate corollary (cf. \cite[Proposition 3.2]{derksen-makam:orbitclosure}):  

\begin{corollary}\label{cor:sep-to-sep} 
Any separating system of $SL_n\times SL_n$-invariants on $(\mathbb{F}^{n\times n})^m$ is mapped by $\sigma^*$ to a separating system of simultaneous conjugation $SL_n$-invariants on $(\mathbb{F}^{n\times n})^{m-1}$. 
\end{corollary}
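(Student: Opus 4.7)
\medskip
\noindent \textbf{Proof plan for Corollary~\ref{cor:sep-to-sep}.}
The plan is to reduce separation on $(\mathbb{F}^{n\times n})^{m-1}$ to separation on $M=(\mathbb{F}^{n\times n})^m$ via the surjectivity statement already established in Proposition~\ref{prop:4.1}. Let $\mathcal{S}\subseteq R=\mathcal{O}(M)^{SL_n\times SL_n}$ be a separating system. Set $\mathcal{S}':=\sigma^*(\mathcal{S})\subseteq \mathcal{O}((\mathbb{F}^{n\times n})^{m-1})^{SL_n}$; I need to show that $\mathcal{S}'$ separates any two points $A,B\in (\mathbb{F}^{n\times n})^{m-1}$ that can be separated by some simultaneous conjugation invariant.

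First, suppose that $A,B\in (\mathbb{F}^{n\times n})^{m-1}$ and that there exists $f\in \mathcal{O}((\mathbb{F}^{n\times n})^{m-1})^{SL_n}$ with $f(A)\neq f(B)$. By Proposition~\ref{prop:4.1}, the map $\sigma^*$ restricts to a surjection $R\twoheadrightarrow \mathcal{O}((\mathbb{F}^{n\times n})^{m-1})^{SL_n}$, so we may choose $\tilde f\in R$ with $\sigma^*(\tilde f)=f$. Unwinding the definition of $\sigma^*$ and $\sigma$, this gives
\[ \tilde f(\sigma(A))=(\sigma^*\tilde f)(A)=f(A)\neq f(B)=(\sigma^*\tilde f)(B)=\tilde f(\sigma(B)).\]
Hence the two points $\sigma(A)=(A_1,\dots,A_{m-1},I)$ and $\sigma(B)=(B_1,\dots,B_{m-1},I)$ of $M$ are separated by the $SL_n\times SL_n$-invariant $\tilde f\in R$.

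Second, since $\mathcal{S}$ is a separating system for $R$ on $M$, there exists $h\in\mathcal{S}$ with $h(\sigma(A))\neq h(\sigma(B))$. Translating back, $(\sigma^*h)(A)\neq (\sigma^*h)(B)$, and $\sigma^*h\in\mathcal{S}'$. This verifies the separating property of $\mathcal{S}'$ for those pairs $(A,B)$ that are separated by some invariant, which is exactly what is required by the definition of separating system.

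The only nontrivial ingredient is the surjectivity in Proposition~\ref{prop:4.1}, which is cited; everything else is a purely formal chase through the definitions of the comorphism $\sigma^*$ and of a separating system, so there is no genuine obstacle to overcome. The argument is entirely characteristic free and uses neither the special structure of $2\times 2$ matrices nor any properties of the generators constructed earlier.
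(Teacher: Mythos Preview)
Your proof is correct and follows essentially the same argument as the paper: lift the separating invariant $f$ to $\tilde f\in R$ via the surjectivity in Proposition~\ref{prop:4.1}, use it to conclude that $\sigma(A)$ and $\sigma(B)$ are separated in $M$, then invoke the separating property of $\mathcal{S}$ to find $h\in\mathcal{S}$ and push it back down via $\sigma^*$. The paper's proof is the same chain of steps with different variable names.
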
 

\begin{proof} Let $S\subset \mathcal{O}(M)^{SL_n\times SL_n}$ be a separating system. 
 Suppose that  for $A=(A_1,\dots,A_{m-1})\in (\mathbb{\F}^{n\times n})^{m-1}$ and 
$B=(B_1,\dots,B_{m-1})\in (\mathbb{\F}^{n\times n})^{m-1}$ there exists an 
$f\in \mathcal{O}((\mathbb{\F}^{n\times n})^{m-1})^{SL_n}$ with $f(A)\neq f(B)$. 
By Proposition~\ref{prop:4.1} there exists a $b\in \mathcal{O}(M)^{SL_n\times SL_n}$ with 
$\sigma^*(b)=f$. Then we have $b(\sigma(A))\neq b(\sigma(B))$, therefore there exists an 
$h\in S$ with $h(\sigma(A))\neq h(\sigma(B))$. Now $\sigma^*(h)\in \sigma^*(S)$ separates $A$ and $B$. 
\end{proof} 

For sake of completeness we recall that for a representation of a reductive group, 
two points can be separated by polynomial invariants if and only if the unique Zariski closed orbits in their orbit closures are different, see for example \cite[Theorem A]{grosshans:book}. 
Therefore Corollary~\ref{cor:sep-to-sep} follows from the following geometric refinement 
of Proposition~\ref{prop:4.1}. 
The group $SL_n$ acts faithfully on $SL_n\times SL_n$ via $a\cdot (g,h):=(ga^{-1},ha^{-1})$, hence we can form the asociated fibre bundle $(SL_n\times SL_n)\times_{SL_n}(\mathbb{F}^{n\times n})^{m-1}$  (see  for example \cite[Lemma 5.16]{bongartz} for the properties of associated fibre bundles needed below).

\begin{proposition} \label{prop:fibre-bundle} (special case of  \cite[Lemma 3.1]{domokos-zubkov}) 
Set 
\[M_0:=\{(A_1,\dots,A_m)\in (\mathbb{F}^{n\times n})^m\mid \det(A_m)=1\}.\]  
The map 
\[SL_n\times SL_n\times (\mathbb{F}^{n\times n})^{m-1}\longrightarrow M_0, 
\quad (g,h,A_1,\dots,A_{m-1})\mapsto (gA_1h^{-1},\dots,gA_{m-1}h^{-1},gh^{-1})\] 
is $SL_n$-invariant,  and factors through an isomorphism 
\[(SL_n\times SL_n)\times_{SL_n}(\mathbb{F}^{n\times n})^{m-1}\stackrel{\cong}\to M_0.\] 
In particular, the map $\sigma$ induces a bijection between $SL_n$-invariant subvarieties in 
$(\mathbb{F}^{n\times n})^{m-1}$ and $SL_n\times SL_n$-invariant subvarieties in $M_0$. 
This bijection respects inclusions and Zariski closures, so restricts to  a bijection between Zariski closed $SL_n$-orbits in $(\mathbb{F}^{n\times n})^{m-1}$ and 
Zariski closed $SL_n\times SL_n$-orbits in $M_0$. 
\end{proposition}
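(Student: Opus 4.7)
The plan is to establish the isomorphism by writing down an explicit morphism in the opposite direction and checking compositions, then deduce the corollaries about invariant subvarieties by transporting structure.

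First I would verify that $\Phi:(g,h,A_1,\dots,A_{m-1})\mapsto (gA_1h^{-1},\dots,gA_{m-1}h^{-1},gh^{-1})$ actually lands in $M_0$ (the last coordinate has determinant $1$ since $g,h\in SL_n$) and is $SL_n$-invariant for the diagonal action $a\cdot(g,h,A_1,\dots,A_{m-1})=(ga^{-1},ha^{-1},aA_1a^{-1},\dots,aA_{m-1}a^{-1})$: this is a one-line computation using $ga^{-1}\cdot aA_ia^{-1}\cdot(ha^{-1})^{-1}=gA_ih^{-1}$ and $ga^{-1}(ha^{-1})^{-1}=gh^{-1}$. By the universal property of the associated fibre bundle, $\Phi$ descends to a morphism $\bar\Phi:(SL_n\times SL_n)\times_{SL_n}(\F^{n\times n})^{m-1}\to M_0$.

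Next I would construct an explicit inverse. Since $\det(B_m)=1$ on $M_0$, the entries of $B_m^{-1}$ are polynomial in those of $B_m$ (via the adjugate), so
\[\Psi:M_0\longrightarrow SL_n\times SL_n\times(\F^{n\times n})^{m-1},\quad (B_1,\dots,B_m)\longmapsto (B_m,I_n,B_m^{-1}B_1,\dots,B_m^{-1}B_{m-1})\]
is a morphism. A direct check gives $\Phi\circ\Psi=\mathrm{id}_{M_0}$. For the other composition, $\Psi\circ\Phi(g,h,A_1,\dots,A_{m-1})=(gh^{-1},I_n,hA_1h^{-1},\dots,hA_{m-1}h^{-1})$, which is exactly $h\cdot(g,h,A_1,\dots,A_{m-1})$ under the $SL_n$-action; hence it descends to the identity on the fibre bundle. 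Therefore $\bar\Phi$ is an isomorphism of varieties.

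For the corollary about invariant subvarieties, note that an $SL_n$-invariant closed subvariety $Y\subseteq(\F^{n\times n})^{m-1}$ determines the closed subvariety $(SL_n\times SL_n)\times_{SL_n}Y$ of the associated fibre bundle, and under $\bar\Phi$ this is exactly the $SL_n\times SL_n$-saturation $(SL_n\times SL_n)\cdot\sigma(Y)$ inside $M_0$ (using that $(g,h)\cdot\sigma(A_1,\dots,A_{m-1})=\Phi(g,h,A_1,\dots,A_{m-1})$). For the converse direction, given an $SL_n\times SL_n$-invariant closed $Z\subseteq M_0$, the inclusion $\sigma(\sigma^{-1}(Z))\subseteq Z$ is trivial, and any $(B_1,\dots,B_m)\in Z$ satisfies $(B_m^{-1},I_n)\cdot(B_1,\dots,B_m)=(B_m^{-1}B_1,\dots,B_m^{-1}B_{m-1},I_n)\in Z\cap\sigma((\F^{n\times n})^{m-1})$, showing $Z=(SL_n\times SL_n)\cdot\sigma(\sigma^{-1}(Z))$. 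Since $\bar\Phi$ is a homeomorphism and these assignments are mutually inverse and preserve inclusions, the bijection restricts to closed orbits (which are precisely the minimal nonempty closed invariant subvarieties).

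The only genuine obstacle is keeping the conventions straight: the formulas for the $SL_n$-action, the fact that $\sigma^*$ corresponds geometrically to pullback along $\sigma:(\F^{n\times n})^{m-1}\hookrightarrow M_0$, and checking that $\Psi$ really is a morphism (which needs the restriction to $M_0$ so that $B_m^{-1}$ exists polynomially). All remaining verifications reduce to the fact that a morphism of varieties with a two-sided inverse is an isomorphism, so no nontrivial algebraic geometry is required beyond this.
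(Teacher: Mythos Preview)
Your argument is correct. The paper does not actually prove this proposition: it is stated as a special case of \cite[Lemma 3.1]{domokos-zubkov}, with a pointer to \cite[Lemma 5.16]{bongartz} for the general properties of associated fibre bundles that are being used. So there is no ``paper's own proof'' to compare against.

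Your approach is to bypass the general machinery entirely and write down the explicit section $\Psi(B_1,\dots,B_m)=(B_m,I_n,B_m^{-1}B_1,\dots,B_m^{-1}B_{m-1})$, check the two compositions by hand, and then read off the correspondence of invariant subvarieties directly. This is more elementary than appealing to the cited lemma: it exploits the special feature of this particular fibre bundle, namely that the base $SL_n\cong (SL_n\times SL_n)/SL_n$ has an obvious global section (the identity), so the bundle is globally trivial and the isomorphism can be written down explicitly. The general result in \cite{domokos-zubkov} and the bundle formalism in \cite{bongartz} would cover situations without such a section, but for the case at hand your direct computation is shorter and perfectly adequate. One small remark: your discussion of the bijection between invariant subvarieties is phrased for \emph{closed} subvarieties, whereas the proposition asserts it for arbitrary invariant subvarieties and then says it respects closures; but since $\bar\Phi$ is an isomorphism of varieties carrying the $SL_n\times SL_n$-action to itself, this extension is immediate.
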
 

\begin{remark}
In a recent preprint  Derksen and Makam \cite{derksen-makam:orbitclosure} gave an algorithm that decides whether the $SL_n\times SL_n$-orbit closures of $A,B\in M$ intersect. 
\end{remark}

We need the following observation of I. Kaygorodov, A. Lopatin and Y. Popov 
\cite[Lemma 3.1]{kaygorodov-lopatin-popov}: 

\begin{lemma}\label{lemma:kaygorodov-etal} The following is an irredundant separating system of conjugation $SL_2$-invariants on 
$(\mathbb{F}^{2\times 2})^3$: 
\[\tr(x_k), \ \det(x_k), \ k=1,2,3; \ \tr(x_lx_r),\ 1\le l<r\le 3; \ \tr(x_1x_2x_3).\]
\end{lemma}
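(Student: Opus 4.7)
The statement splits into two tasks: (i) show that the ten listed invariants form a separating set, and (ii) show that no one of them can be removed without losing this property. My plan for (i) is the strong reduction: prove that the ten invariants already $\F$-algebra generate the full ring $\mathcal{O}((\F^{2\times 2})^3)^{SL_2}$ of simultaneous conjugation invariants, since any generating set of invariants is automatically a separating set.

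For (i), in characteristic zero this is the classical Sibirskii--Procesi theorem; in arbitrary characteristic it follows from Donkin's description, which says that the ring of simultaneous $GL_n$-conjugation invariants is generated by the coefficients of the characteristic polynomials of monomials in $x_1,\dots,x_m$. For $n=2$ these coefficients are $\tr$ and $\det$. Multiplicativity of $\det$ reduces $\det$ of any monomial to the three $\det(x_k)$; the $n=2$ Cayley--Hamilton relation $x_k^2=\tr(x_k)x_k-\det(x_k)I$ together with its polarization $x_lx_r+x_rx_l=\tr(x_l)x_r+\tr(x_r)x_l+(\tr(x_lx_r)-\tr(x_l)\tr(x_r))I$ then reduces $\tr$ of any word in $x_1,x_2,x_3$ to the listed single, pair, and triple traces.

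For (ii), the symmetric group $S_3$ acts on $(\F^{2\times 2})^3$ by permuting the three matrix arguments and preserves the listed system setwise, so it suffices to handle one representative from each $S_3$-orbit on the generators, namely $\tr(x_1)$, $\det(x_1)$, $\tr(x_1x_2)$, and $\tr(x_1x_2x_3)$. For each I would exhibit two triples on which all ten invariants agree except the selected one. The conceptual case is $\tr(x_1x_2x_3)$: compare any triple $(A_1,A_2,A_3)$ with its componentwise transpose $(A_1^T,A_2^T,A_3^T)$. Transposition preserves every $\tr(x_k)$, $\det(x_k)$, and $\tr(x_lx_r)$ (the last because $\tr(A_l^TA_r^T)=\tr((A_rA_l)^T)=\tr(A_rA_l)=\tr(A_lA_r)$), but sends $\tr(x_1x_2x_3)=\tr(A_1A_2A_3)$ to $\tr(A_1^TA_2^TA_3^T)=\tr(A_3A_2A_1)=\tr(A_1A_3A_2)$ by cyclicity of trace. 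A concrete example valid in every characteristic is $A_1=\left(\begin{smallmatrix}0&1\\0&0\end{smallmatrix}\right)$, $A_2=\left(\begin{smallmatrix}1&0\\0&0\end{smallmatrix}\right)$, $A_3=\left(\begin{smallmatrix}0&0\\1&0\end{smallmatrix}\right)$, for which a direct computation gives $\tr(A_1A_2A_3)=0$ while $\tr(A_1A_3A_2)=1$.

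For the remaining three representatives I would use triples of the form $(A_1,0,0)$ or $(A_1,A_2,0)$ to force all invariants involving $x_3$ (and, for the first form, also all pair and triple traces) to vanish, and then realize the desired single-invariant discrepancy with small explicit matrices; for instance $A_1=\left(\begin{smallmatrix}1&1\\0&0\end{smallmatrix}\right)$ versus $B_1=\left(\begin{smallmatrix}0&1\\0&0\end{smallmatrix}\right)$ (with second and third slots zero) have equal determinants and distinct traces in every characteristic, handling $\tr(x_1)$ uniformly; analogous rank-one constructions handle $\det(x_1)$ and $\tr(x_1x_2)$. The main obstacle is guaranteeing that these explicit witnesses work uniformly in every characteristic, in particular in characteristic $2$ where the identity matrix is traceless and sign-based normalizations degenerate; deliberately avoiding invertible matrices and using rank-one or upper-triangular examples like the ones above sidesteps these issues.
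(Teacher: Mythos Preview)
Your argument is correct. Note, however, that the paper does not prove this lemma: it is quoted as \cite[Lemma 3.1]{kaygorodov-lopatin-popov} and used as a black box, so there is no internal proof to compare against.

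One small slip in part (ii): the full $S_3$ does \emph{not} preserve the listed system setwise, since an odd permutation sends $\tr(x_1x_2x_3)$ to $\tr(x_1x_3x_2)$, which is not among the ten elements. The cyclic subgroup $A_3$ does preserve the set and already produces exactly the four orbits you name, so your reduction to four representatives stands. The rest is sound in every characteristic: for (i), Donkin's theorem plus the characteristic-free Cayley--Hamilton reduction (including the relation obtained by left-multiplying the polarized identity by $x_1$ and taking traces, which expresses $\tr(x_1x_3x_2)$ through the listed elements) shows the ten elements generate the invariant ring, hence separate; for (ii), the transpose witness for $\tr(x_1x_2x_3)$ and the explicit rank-one triples work uniformly, and the cases you leave implicit are routine (e.g.\ $(E_{12},0,0)$ versus $(E_{12}+E_{21},0,0)$ handles $\det(x_1)$, and $(E_{12},E_{21},0)$ versus $(E_{12},E_{12},0)$ handles $\tr(x_1x_2)$).
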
 

Note that we have 
\begin{equation}\label{eq:tr(xy)}
\langle x_l|x_r\rangle=\tr(x_l)\tr(x_r)-\tr(x_lx_r).
\end{equation} 

\medskip

\begin{proofof}{Theorem~\ref{thm:separating} (ii)} 
Assume to the contrary that omitting an element $h$ from $\mathcal{S}_m$ we still have a separating system. 
Now $h$ depends only on at most $4$ matrix components. By symmetry we may assume 
that $h$ does not depend $x_5,\dots,x_m$, so $h\in \mathcal{S}_4$. 
Identify $(\F^{2\times 2})^4$ with the subspace of $(\F^{2\times 2})^m$ consisting of the tuples whose last $m-4$ matrix components are zero. Since all elements in $\mathcal{S}_m\setminus \mathcal{S}_4$ vanish identically on the subspace $(\F^{2\times 2})^4$ in 
$(\F^{2\times 2})^m$, we conclude that $\mathcal{S}_4\setminus \{h\}$ is a separating system in $\mathcal{O}((\F^{2\times 2})^4)$. 

Suppose first that $h=\xi(x_1,x_2,x_3,x_4)$. Denote by $\sigma:(\mathbb{F}^{2\times 2})^3\mapsto (\mathbb{F}^{2\times 2})^4$ the embedding $(A_1,A_2,A_3)\mapsto (A_1,A_2,A_3,I)$ (where $I$ is the $2\times 2$ identity matrix). 
Then \eqref{eq:tr(xy)} shows that  $\sigma^*(\mathcal{S}_4\setminus \{h\})$ is contained in the subalgebra of $\mathcal{O}((\mathbb{F}^{2\times 2})^3)$ generated by the elements of degree at most two in the separating system given in Lemma~\ref{lemma:kaygorodov-etal}. Therefore by Lemma~\ref{lemma:kaygorodov-etal}, $\sigma^*(\mathcal{S}_4\setminus \{h\})$ is not a separating system on $(\mathbb{F}^{2\times 2})^3$ with respect to the conjugation action of $SL_2$, contrary to Corollary~\ref{cor:sep-to-sep}. 

If $h$ does not depend on all the four matrix components, say $h$ does not depend on $x_4$, 
then we get that a proper subset of $\{\det(x_1),\ \det(x_2),\ \langle x_1|x_2\rangle\}$ is a separating system on $(\mathbb{F}^{2\times 2})^2$ with respect to the  
action of $SL_2\times SL_2$. This is obviously a contradiction. 
\end{proofof} 

The main result of \cite{kaygorodov-lopatin-popov} can be obtained as a consequence 
of Theorem~\ref{thm:separating} (i) and Lemma~\ref{lemma:kaygorodov-etal}: 

\begin{corollary}\label{cor:kaygorodov-etal} (Kaygorodov, Lopatin and Popov 
\cite[Theorem 1.1]{kaygorodov-lopatin-popov}) For an algebraically closed field $\mathbb{F}$ of arbitrary characteristic and a positive integer $m$, the set 
\[\{\tr(x_k), \ \det(x_k),\ \tr(x_lx_r),\ \tr(x_{k_1}x_{k_2}x_{k_3})\mid  \ k=1,2,3; \ 1\le l<r\le m; 
\ 1\le k_1<k_2<k_3\le m\}\]
is an irredundant separating system of $SL_2$-invariants on $(\mathbb{F}^{2\times 2})^m$. 
\end{corollary}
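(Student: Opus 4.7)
The plan is to deduce this corollary from Theorem~\ref{thm:separating}(i), applied to the larger space $(\mathbb{F}^{2\times 2})^{m+1}$, together with the $\sigma^*$-reduction of Corollary~\ref{cor:sep-to-sep} and the three-matrix case covered by Lemma~\ref{lemma:kaygorodov-etal}. By Theorem~\ref{thm:separating}(i) the system $\mathcal{S}_{m+1}$ separates $SL_2\times SL_2$-orbits on $(\mathbb{F}^{2\times 2})^{m+1}$, so Corollary~\ref{cor:sep-to-sep} gives that $\sigma^*(\mathcal{S}_{m+1})$ separates $SL_2$-conjugation orbits on $(\mathbb{F}^{2\times 2})^m$. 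It is then enough to verify that $\sigma^*(\mathcal{S}_{m+1})$ is contained in the $\mathbb{F}$-subalgebra $\mathcal{A}$ generated by the proposed set.

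The easy parts of this verification are direct: $\sigma^*(\det(x_k))\in\{\det(x_k),1\}$ is immediate, and by \eqref{eq:tr(xy)} one has $\sigma^*(\langle x_l\vert x_r\rangle)=\tr(x_l)\tr(x_r)-\tr(x_lx_r)$ for $l,r\le m$, while $\sigma^*(\langle x_l\vert x_{m+1}\rangle)=\tr(x_l)$, all in $\mathcal{A}$. For the $\xi$-terms I would use the block-matrix definition \eqref{eq:xi-def} with $q=2$ together with the Cayley--Hamilton identity $\mathrm{adj}(y)=\tr(y)I-y$ and the formula $\langle a|b\rangle=\tr(a)\tr(b)-\tr(ab)$; a short explicit computation then yields
\[\xi(y_1,y_2,y_3,I)=\tr(y_1y_2y_3)-\tr(y_1y_2)\tr(y_3)\in\mathcal{A}.\]
The $\xi$-terms $\xi(y_1,y_2,y_3,y_4)$ with all four arguments free are $SL_2$-conjugation invariants, and the classical description of the $SL_2$-conjugation invariant ring of $2\times 2$ matrices (Sibirskii and Procesi in characteristic zero, Lopatin in positive characteristic) places them in $\mathcal{A}$ as well, which finishes the separating statement~(i).

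For the irredundancy~(ii) I would argue by contradiction. Suppose that removing some element $f$ from the proposed set still leaves a separating system $\mathcal{T}'$. Each proposed generator mentions at most three distinct matrix indices, so after relabeling I may take $f$ to depend only on $x_1,x_2,x_3$. Consider the $SL_2$-equivariant embedding $(\mathbb{F}^{2\times 2})^3\hookrightarrow(\mathbb{F}^{2\times 2})^m$, $(A_1,A_2,A_3)\mapsto(A_1,A_2,A_3,0,\dots,0)$: every proposed invariant mentioning an index $\ge 4$ contains a factor $x_k$ evaluated on the zero matrix and hence vanishes identically on the image, while the remaining proposed invariants restrict to precisely the Kaygorodov--Lopatin--Popov system of Lemma~\ref{lemma:kaygorodov-etal}. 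By the irredundancy asserted in that lemma there exist $A,B\in(\mathbb{F}^{2\times 2})^3$ which are distinguished only by $f$; their images in $(\mathbb{F}^{2\times 2})^m$ are then not separated by $\mathcal{T}'$, contradicting the separating hypothesis.

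The main obstacle will be expressing $\xi(y_1,y_2,y_3,y_4)$ (with four free arguments) as a polynomial in the proposed generators. The $\xi(y_1,y_2,y_3,I)$ case reduces by a short direct computation, but the fully four-matrix case inevitably produces a length-four trace such as $\tr(y_1y_3y_2y_4)$, and eliminating it in favor of length-$\le 3$ traces and determinants in arbitrary characteristic is precisely the content of the classical Sibirskii--Procesi--Lopatin generation theorem, which is the ingredient I invoke here.
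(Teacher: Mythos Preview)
Your approach matches the paper's: apply Theorem~\ref{thm:separating}(i) to $(\mathbb{F}^{2\times 2})^{m+1}$, transfer via Corollary~\ref{cor:sep-to-sep}, and check that $\sigma^*(\mathcal{S}_{m+1})$ lies in the subalgebra $\mathcal{A}$ generated by the proposed set using \eqref{eq:tr(xy)} and \eqref{eq:tr(xyz)}; irredundancy then comes from Lemma~\ref{lemma:kaygorodov-etal} by restriction to three matrices, exactly as in the paper.

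You are in fact more careful than the paper at one point: the paper's terse proof does not explicitly treat $\sigma^*(\xi(x_{k_1},x_{k_2},x_{k_3},x_{k_4}))$ with $k_4\le m$, and you rightly flag this as the residual case. However, resolving it by invoking the full generation theorem for $2\times 2$ conjugation invariants short-circuits the whole argument: once you know the proposed set \emph{generates} the invariant ring, separation is immediate and neither Theorem~\ref{thm:separating}(i) nor Corollary~\ref{cor:sep-to-sep} does any work. If you want a proof that genuinely derives the corollary from the paper's own results, replace that appeal by the direct computation: carrying your block-determinant manipulation through with $y_4$ in place of $I$ yields
\[
\xi(y_1,y_2,y_3,y_4)=-\langle y_2\mid y_4\,\mathrm{adj}(y_1)\,y_3\rangle,
\]
and after expanding $\mathrm{adj}(y_1)=\tr(y_1)I-y_1$ and $\langle a\mid b\rangle=\tr(a)\tr(b)-\tr(ab)$ the only length-$4$ trace that appears is eliminated by a couple of applications of the polarized Cayley--Hamilton identity $ab+ba=\tr(a)b+\tr(b)a-\langle a\mid b\rangle I$, valid over any commutative ring. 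That is not ``precisely the content'' of the generation theorem --- it is one elementary degree-$4$ instance of it.
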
 

\begin{proof} We have the equality 
\begin{equation}\label{eq:tr(xyz)} 
\xi(x_1,x_2,x_3,I)=\tr(x_1x_2x_3)-\tr(x_1x_2)\tr(x_3).
\end{equation}  
Indeed, consider \eqref{eq:xi-def} in the special case $q=2$, and substitute 
$x_4$ by the $2\times 2$ identity matrix $I$: 
\[\left|\begin{array}{cc}z_1x_1 & w_1x_3 \\ w_2I & z_2x_2 \end{array}\right|=
\left|\begin{array}{cc}0 & w_1x_3-z_1w_2^{-1}z_2x_1x_2 \\ w_2I & z_2x_2 \end{array}\right|
=|w_1w_2x_3-z_1z_2x_1x_2|.\] 
Now \eqref{eq:tr(xy)} gives \eqref{eq:tr(xyz)}, which implies by   
Theorem~\ref{thm:separating} (i) and Corollary~\ref{cor:sep-to-sep} 
(applied for $m+1$ instead of $m$) that the set in our statement is indeed a separating system. Its irredundancy is an immediate consequence of Lemma~\ref{lemma:kaygorodov-etal}. 
\end{proof}

\begin{remark} We note that similarly to the proof of Theorem~\ref{thm:Z-fingen}, using 
Lemma~\ref{lemma:SR} one gets form the results of \cite{donkin} and \cite{derksen-makam:2} that the $\Z$-form $\Z[x_{ijk}\mid 1\le i,j\le n,\ k=1,\dots,m]\cap \mathcal{O}((\mathbb{Q}^{n\times n})^m)^{SL_n}$ of the $\mathbb{Q}$-algebra of simultaneous conjugation invariants of matrices is generated by coefficients of the characteristic polynomials of monomials in the generic matrices $x_1,\dots,x_m$, that have total degree at most $(m+1)n^4$. 
\end{remark}

\section{Vector invariants of the special orthogonal group}\label{sec:orthogonal}

Up to base change there is only one non-singular quadratic form on $\mathbb{F}^4=\mathbb{F}^{2\times 2}$, namely the form $A\mapsto \det(A)$ ($A\in \mathbb{F}^{2\times 2}$). The \emph{orthogonal group} $O(4,\mathbb{F})$ therefore can be defined as the subgroup of 
$GL(\mathbb{F}^{2\times 2})$ consisting of the linear transformations that preserve the determinant. The connected component of the identity in $O(4,\mathbb{F})$ is an index two subgroup, called the \emph{special orthogonal group} $SO(4,\mathbb{F})$, and it is well known that 
$SO(4,\mathbb{F})=\rho(SL_2\times SL_2)$, where $\rho:SL_2\times SL_2\to GL(\mathbb{F}^{2\times 2})$ is the representation of $SL_2\times SL_2$ on $\mathbb{F}^{2\times 2}$ given by $(g,h)\cdot A:=gAh^{-1}$. 
So the representation of $SL_2\times SL_2$ on $(\mathbb{F}^{2\times 2})^m$ can be thought of as the sum of $m$ copies of the defining representation of $SO(4,\mathbb{F})$, 
and the algebra $R=\mathcal{O}(M)^{SL_2\times SL_2}=\mathcal{O}((\mathbb{F}^{2\times 2})^m)^{SO(4,\mathbb{F})}$ is the so-called \emph{algebra of vector invariants of the special orthogonal group of degree} $4$. 

Generators of the algebra  $\mathcal{O}((\mathbb{F}^n)^m)^{SO(n,\mathbb{F})}$ have a uniform description in characteristic zero, and this is one of the theorems usually referred to as the First Fundamental Theorem of Invariant Theory, see \cite{weyl}. 
The result extends verbatim (but with essentially different proof) for odd positive characteristic by \cite{deconcini-procesi}. However, in the case 
$\mathrm{char}(\mathbb{F})=2$,  
for all even $m$ an indecomposable multilinear $SO(n,\mathbb{F})$-invariant on $(\mathbb{F}^n)^m$ was constructed in \cite[Theorem 7]{domokos-frenkel:2}. 
This led Procesi \cite[page 551]{procesi} to write that "in this case the invariant theory has to be deeply modified". 
In particular, the following question is  open: 

\begin{question} \label{quest:df}
Do the invariants constructed in \cite{domokos-frenkel:2} together with the quadratic invariants generate $\mathcal{O}((\mathbb{F}^n)^m)^{SO(n,\mathbb{F})}$ when $\mathrm{char}(\mathbb{F})=2$? 
\end{question} 

Now as an immediate consequence of Theorem~\ref{thm:main} we get that 
the answer to Question~\ref{quest:df} is affirmative in the special  case $n=4$: 

\begin{corollary}\label{cor:orth-4}
Let $\mathbb{F}$ be an algebraically closed field of characteristic $2$. 
The elements given in \cite[Theorem 7]{domokos-frenkel:2} together with the usual quadratic invariants constitute a minimal generating system of the algebra of vector invariants 
of the special orthogonal group $SO(4,\mathbb{F})$ on $(\mathbb{F}^4)^m$.   
\end{corollary}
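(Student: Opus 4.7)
The plan is to reduce Corollary~\ref{cor:orth-4} directly to Theorem~\ref{thm:main} via the identification of groups and representations explained at the beginning of Section~\ref{sec:orthogonal}. Since $SO(4,\mathbb{F})=\rho(SL_2\times SL_2)$ with $\rho$ a surjective homomorphism of algebraic groups acting faithfully on $\mathbb{F}^{2\times 2}\cong\mathbb{F}^4$, the two invariant rings coincide:
\[\mathcal{O}((\mathbb{F}^4)^m)^{SO(4,\mathbb{F})}=\mathcal{O}((\mathbb{F}^{2\times 2})^m)^{SL_2\times SL_2}=R.\]
Thus Theorem~\ref{thm:main} already provides a minimal homogeneous generating system of the left hand side, namely the set \eqref{eq:2x2-generators}.

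The next step is to match up the generators in \eqref{eq:2x2-generators} with the invariants mentioned in the statement of the corollary. The quadratic generators $\det(x_k)$ and $\langle x_l\vert x_r\rangle$ are, by their very definition, the values and polarizations of the non-degenerate quadratic form $A\mapsto \det(A)$ on $\mathbb{F}^{2\times 2}$ whose orthogonal group is $O(4,\mathbb{F})$; so they agree (up to the fixed identification $\mathbb{F}^{2\times 2}\cong \mathbb{F}^4$) with the usual quadratic $SO(4,\mathbb{F})$-invariants obtained from the defining form and its associated bilinear form. The higher degree generators $\xi(x_{k_1},\dots,x_{k_{2q}})$ with $2\le q\le m/2$ are the multilinear invariants constructed in \cite[Theorem 7]{domokos-frenkel:2}: this identification is implicit in our use of \cite[Proposition 2.5]{domokos-frenkel:1} to establish the indecomposability statement in Theorem~\ref{thm:main}, since that proposition is formulated for precisely the Domokos--Frenkel invariants under the same identification of $SL_2\times SL_2$ with $\mathrm{Spin}(4,\mathbb{F})$.

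Putting these two identifications together, the set \eqref{eq:2x2-generators} is, up to the isomorphism $(\mathbb{F}^{2\times 2})^m\cong (\mathbb{F}^4)^m$, exactly the union of the usual quadratic $SO(4,\mathbb{F})$-invariants with the invariants from \cite[Theorem 7]{domokos-frenkel:2}. Since Theorem~\ref{thm:main} asserts both that this set generates $R$ and that it is a \emph{minimal} homogeneous generating system, the same conclusion transfers to the $SO(4,\mathbb{F})$-side, which is exactly the statement of Corollary~\ref{cor:orth-4}.

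The only non-routine point is the identification in the second paragraph of the $\xi_q$ with the Domokos--Frenkel invariants; I expect this to be the main (but modest) obstacle, resolved by inspection of the explicit formulas in \cite{domokos-frenkel:2} and of the proof of \cite[Proposition 2.5]{domokos-frenkel:1}, which is already invoked in the proof of Theorem~\ref{thm:main}.
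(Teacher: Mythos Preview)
Your proposal is correct and follows essentially the same approach as the paper: the corollary is stated there as an immediate consequence of Theorem~\ref{thm:main} via the identification $\mathcal{O}((\mathbb{F}^4)^m)^{SO(4,\mathbb{F})}=\mathcal{O}((\mathbb{F}^{2\times 2})^m)^{SL_2\times SL_2}$ explained at the start of Section~\ref{sec:orthogonal}, with no further argument given. Your write-up simply makes explicit the matching of the generators in \eqref{eq:2x2-generators} with the quadratic invariants and the Domokos--Frenkel invariants, which the paper leaves implicit.
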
 

Moreover, formally the same elements as in Corollary~\ref{cor:orth-4} generate the appropriate  $\Z$-form of $\mathcal{O}((\mathbb{C}^4)^m)^{SO(4,\mathbb{C})}$ by 
an argument similar to the proof of Theorem~\ref{thm:2x2-integral}. 
 
\section{Note added after publication}\label{sec:addendum} 

Lopatin in \cite[Theorems 2.3 and 2.20]{lopatin} determines a 
minimal system of generators of the algebra of semi-invariants of an arbitrary quiver with a dimension vector taking value $2$ at each vertex of the quiver, over an arbitrary infinite base  field. In particular, in \cite[Lemma 8.6]{lopatin} Lopatin gives an explicit  minimal generating system of semi-invariants of quivers having two vertices with dimension vector taking value $2$ at both vertices. So as a special case of Lopatin's result, a minimal generating system of the algebra of semi-invariants of $2\times 2$ matrices  was known prior to our 
Theorem~\ref{thm:main} also in characteristic $2$. 

Theorem~\ref{thm:main} is obtained by different calculations than the corresponding result in \cite{lopatin}, and the other topics of the present paper (separating invariants and the connection to the orthogonal group) are not discussed in \cite{lopatin}.



\begin{thebibliography}{99999}

\bibitem{bongartz} K. Bongartz, 
Some geometric aspects of representation theory, 
Algebras and Modules I,  CMS Conf. Proc. 23 (1998), 1-27, 
(I. Reiten, S.O. Smal{\o}, {\O}. Solberg, editors). 


\bibitem{deconcini-procesi} C. De Concini and C. Procesi, A characteristic free approach to invariant theory, Adv. Math. 21 (1976), 330-354.

\bibitem{derksen-kemper:book} H. Derksen, G. Kemper,
Computational Invariant Theory, Encyclopaedia of Mathematical Sciences 130,
Springer-Verlag, 2002.


\bibitem{derksen-makam:1} H. Derksen and V. Makam, 
Polynomial degree bounds for matrix semi-invariants, 
Adv. Math. 310 (2017), 44-63. 

\bibitem{derksen-makam:2} H. Derksen and V. Makam, 
Generating invariant rings of quivers in arbitrary characteristic, 
J. Algebra 489 (2017), 435-445. 

\bibitem{derksen-makam:orbitclosure} H. Derksen and V. Makam, 
Algorithms for orbit closure separation for invariants and semi-invariants of matrices, 
arXiv:1801.02043

\bibitem{derksen-weyman} H. Derksen and J. Weyman,  
Semi-invariants of quivers and saturation for Littlewood-Richardson 
coefficients, J. Amer. Math. Soc. 13  (2000), 467-479.   

\bibitem{domokos:3x3} M. Domokos, Relative invariants of $3\times 3$ matrix triples, 
Lin. Multilin. Algebra 47 (2000), 175-190. 

\bibitem{domokos} M. Domokos, Poincar\'e series of semi-invariants of $2\times 2$ matrices, Lin. Alg. Appl. 310 (2000), 183-194. 

\bibitem{domokos-drensky} M. Domokos and V. Drensky, 
Gr\"obner bases for the rings of special orthogonal and $2\times 2$ matrix invariants, 
J. Algebra 243 (2001), 706-716. 

\bibitem{domokos-drensky:2} M. Domokos and V. Drensky, 
Defining relation for semi-invariants of three by three matrix triples, 
J. Pure Appl. Alg. 216 (2012), 2098-2105. 

\bibitem{domokos-frenkel:1} M. Domokos and P. E. Frenkel, 
On orthogonal invariants in characteristic $2$, 
J. Algebra 274 (2004), 662-688. 

\bibitem{domokos-frenkel:2} M. Domokos and P. E. Frenkel, 
Mod $2$ indecomposable orthogonal invariants, 
Adv. Math. 192 (2005), 209-217. 

\bibitem{domokos-zubkov} M. Domokos and A. N. Zubkov,   
Semi-invariants of quivers as determinants, Transformation Groups 6  (2001), 
9-24.  

\bibitem{donkin} S. Donkin, Invariants of several matrices, 
Inv. Math.  110 (1992), 389-401. 

\bibitem{donkin:2} S. Donkin, Tilting modules for algebraic groups and finite dimensional algebras, pp. 215-257 in Handbook of Tilting Theory, London Math. Soc. Lecture Notes Series 332, ed. L. A. H\"ugel, D. Happel, H. Krause, Cambridge University Press, 2007. 

\bibitem{draisma-kemper-wehlau} J. Draisma, G. Kemper, D. Wehlau, 
Polarization of separating invariants, Canad. J. Math. 60 (2008), 556-571.

\bibitem{garg-etal} A. Garg, L. Gurvits, R. Oliveira and A. Widgerson, A deterministic polynomial time algorithm for non-commutative rational identity testing, 
2016 IEEE 57th Annual Symposium on Foundations of Computer Science (2016), 
109-117. 

\bibitem{grosshans:book}
F. D. Grosshans,
Algebraic Homogeneous Spaces and Invariant Theory,
Lecture Notes in Mathematics 1673, Springer-Verlag, Berlin, 1997.

\bibitem{grosshans} F. D. Grosshans, 
Vector invariants in arbitrary characteristic, 
Transform. Groups 12 (2007), 499-514. 

\bibitem{hrubes-etal} P. Hrube\u{s} and A. Wigderson, 
Non-commutative arithmetic circuits with division, ITCS’14  Proceedings of the 5th conference on Innovations in theoretical computer science, pp. 49-66, Princeton,
NJ,  2014.

\bibitem{ivanyos_etal} G. Ivanyos, Y. Qiao, K. V. Subrahmanyam, Non-commutative Edmonds' problem and matrix semi-invariants, Comput. Complex. (2017), 1-47. 

\bibitem{jantzen} J. C. Jantzen, Representations of Algebraic Groups, Second edition, 
Amer. Math. Soc., Providence, Rhode Island, 2003. 

\bibitem{kaygorodov-lopatin-popov} I. Kaygorodov, A. Lopatin and Y. Popov, 
Separating invariants for $2\times 2$ matrices, 
Lin. Alg. Appl. 559 (2018), 114-124. 

\bibitem{king}
A. D. King,  
Moduli of representations of finite dimensional algebras,
Quart. J. Math. Oxford (2) 45 (1994), 515-530. 

\bibitem{konig} D. K\H onig, \"Uber Graphen und ihre Anwendung auf Determinantentheorie und Mengenlehre, Math. Ann., 77 (1916), 453-465.

\bibitem{lopatin} A. Lopatin, Minimal generating set for semi-invariants of quivers of dimension two, Lin. Alg. Appl. 434 (2011), 1920-1944. 


\bibitem{mukai} S. Mukai, An Introduction to Invariants and Moduli, 
Cambridge Univ. Press, 2003.

\bibitem{procesi} C. Procesi, Lie Groups (An Approach through Invariants and Representations), Springer, New York, 2007. 

\bibitem{schofield-vandenbergh} A. Schofield and M. Van den Bergh, 
Semi-invariants of quivers for arbitrary dimension vectors, 
Indag. Mathem., N.S. 12 (2001), 125-138.   

\bibitem{seshadri} C. S. Seshadri, Geometric reductivity over arbitrary base, 
Adv.  Math. 26 (1977), no. 3, 225-274.

\bibitem{weyl}
H. Weyl,
The Classical Groups, Second edition,
Princeton University Press, Princeton, 1946.

\end{thebibliography}
\end{document}